\newtheorem{theorem}{Theorem}[section]
\newtheorem*{thA}{Theorem A}
\newtheorem*{thB}{Theorem B}
\newtheorem*{thC}{Theorem C}
\newtheorem{lemma}[theorem]{Lemma}
\newtheorem{corollary}[theorem]{Corollary}
\newtheorem{conjecture}{Conjecture}
\newtheorem{example}[theorem]{Example}
\newtheorem{problem}[theorem]{Problem}
\begin{document}

\title[Foliations transverse to a closed conformal vector field]{Foliations transverse to a closed conformal vector field}

\author[E. da Silva]{Euripedes da Silva}
\address{E. da Silva: Departamento de Matem\'atica, Instituto Federal de Educa\c{c}\~ao, Ci\^encia e Tecnologia do Cear\'a,
	Av. Parque Central, 1315, Distrito Industrial I, 61939-140, 
	Maracana\'u-CE, Brazil. \newline 
	and 	      Departamento de Matem\'atica, Universidade Federal do Cear\'a,
	Rua Campus do Pici, s/n, Bloco 914, Pici, 60440-900, 
	Fortaleza-CE, Brazil.
	E-mail: {\tt euripedes.carvalho@ifce.edu.br}
} 

\author[I. Gonçalves]{Ícaro Gonçalves}
\address[I. Gonçalves]{Centro de Matematica, Computacção e Cognição, Universidade Federal do ABC, Av. dos Estados, 5001, Santo André-SP, 09280-560, Brasil.}
\email{icaro.goncalves@ufabc.edu.br}  

\author[J. Pereira]{Júlio Pereira}
\address[J.Pereira]{Instituto Federal de Educação, Ciência e Tecnologia Baiano, Campus Valença, Rua Glicério Tavares, s/n, Valença-BA, 45400-000, Brasil.}
\email{julio.pereira@ifbaiano.edu.br}

\thanks{The first-named author was partially supported by the CNPq-Brazil grant 405618/2023-9 and by the Serrapilheira Institute (grant number Serra -- R-2110-39576).}
\thanks{The third author was financed by Coordenação de Aperfeiçoamento de Pessoal de Nível Superior- Brazil (CAPES) - Finance Code 001.}
\subjclass[2010]{53C12;53C42}

\keywords{Riemannian Manifolds, Foliations, Totally Geodesic, Maximum Principle}
\begin{abstract}
	In this article, we study the geometric properties of codimension one foliations on Riemannian manifolds equipped with vector fields that are closed and conformal. Apart from its singularities, these vector fields define codimension one foliations with nice geometric features, which we call Montiel Foliations. We investigate conditions for which a foliation transverse to this structure has totally geodesic leaves, as well as how the ambient space and the geometry of the leaves forces a given foliation into a Montiel Foliation. Our main results concern minimal leaves and constant mean curvature foliations, having compact or complete noncompact leaves. Finally, we characterize totally geodesic foliations by means of its relation to a prior Montiel Foliation. 
\end{abstract}
\maketitle


\section{Introduction}

The study of the extrinsic geometry of codimension one foliations on Riemannian manifolds has been undertaken by numerous authors. In the case of totally geodesic foliations, from existence to the topology and geometry of manifolds carrying such foliations, we refer to  [\cite{abe:1973}, \cite{brito}, \cite{BritoW-1988}, \cite{BG}, \cite{Ferus:1970}, \cite{Ghys-1983}, \cite{JohnsonW:1980}, \cite{SilvaSR:2023}]. When the leaves have constant mean curvature, including the minimal case, several authors have discussed rigidity results, see for instance [\cite{BarbosaGS-1987}, \cite{BarbosaKO:1991}, \cite{Oshikiri-1981}, \cite{Oshikiri-1982}, \cite{Oshikiri-1987}]. We also refer to \cite{czar} and the book \cite{rov-pawel} and the references therein.

In 2008, Meeks III, Pérez, and Ros proposed the following conjecture in \cite[Conjecture 5.1.2]{MeeksPR:2008}:

\begin{conjecture}\label{conjecture:control_mean_curv_general}
	Let $\mathcal{F}$ be a codimension one foliation of a complete Riemannian manifold $\overline{M}^{n+1}$. Assume that there is $K_0 \geq 0$ such that $\overline{M}$ has Ricci curvature bounded from below by $-nK_0$. Suppose that each leaf $L$ of $\mathcal{F}$ has constant mean curvature $H_L$. Then $|H_L| \leq \sqrt{K_0}$.
\end{conjecture}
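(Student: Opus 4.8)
The plan is to convert the constant-mean-curvature hypothesis into a transverse differential equation for the mean curvature and then run a maximum-principle/integral argument whose sharp constant is forced to be $\sqrt{K_0}$ by the Ricci bound. First I would pass to the orientation double cover of $\overline{M}$ if necessary, so that $\mathcal{F}$ admits a global unit normal field $N$; this affects neither completeness, nor the Ricci bound, nor the constancy of $H_L$ along leaves. Writing $H = \frac1n\operatorname{div} N$ for the mean curvature and $S = \nabla N|_{TL}$ for the shape operator of the leaves, the fact that $H$ is constant on each leaf says precisely that its gradient is normal, $\nabla H = N(H)\,N$.

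The first key step is a transverse Riccati equation for $H$. From the Bochner-type identity $N(\operatorname{div} N) = \operatorname{div}(\nabla_N N) - \operatorname{tr}((\nabla N)^2) - \operatorname{Ric}(N,N)$, together with $\operatorname{tr}((\nabla N)^2) = |S|^2$ and the splitting $\operatorname{div}(\nabla_N N) = \operatorname{div}_L(\nabla_N N) - |\nabla_N N|^2$ (legitimate since $\nabla_N N$ is tangent to the leaves), I obtain
\[ n\,N(H) = \operatorname{div}_L(\nabla_N N) - |\nabla_N N|^2 - |S|^2 - \operatorname{Ric}(N,N). \]
As a sanity check, on the horosphere foliation of hyperbolic space $\mathbb{H}^{n+1}$ one has $\nabla_N N = 0$, $|S|^2 = n$, $\operatorname{Ric}(N,N) = -n$ and $N(H) = 0$, so the equation holds and there $H \equiv 1 = \sqrt{K_0}$; thus the inequality to be proved is attained and cannot be improved.

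Next I would locate an extremal leaf. Orienting $N$ so that $\sup_{\overline M} H = \sup_L |H_L|$, suppose this supremum is attained at a point, and let $L^\ast$ be the leaf through it. Since $H$ is constant on leaves, $H$ equals its maximum on all of $L^\ast$, so every point of $L^\ast$ is a maximum of $H$ and hence $N(H) \equiv 0$ on $L^\ast$. If $L^\ast$ is compact, integrating the Riccati equation over $L^\ast$ annihilates the divergence term and gives
\[ \int_{L^\ast}\bigl(|\nabla_N N|^2 + |S|^2 + \operatorname{Ric}(N,N)\bigr) = 0. \]
Since $|\nabla_N N|^2 \ge 0$, $|S|^2 \ge nH^2$ by Cauchy--Schwarz, and $\operatorname{Ric}(N,N) \ge -nK_0$ by hypothesis, the integrand is at least $n\bigl(H_{L^\ast}^2 - K_0\bigr)$, so the identity forces $H_{L^\ast}^2 \le K_0$. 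The same argument at the minimizing leaf yields $-\sqrt{K_0} \le H_L \le \sqrt{K_0}$ for every leaf, as desired.

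The hard part will be upgrading this from the compact case to complete, possibly noncompact (even dense) leaves, where the extremum of $H$ may fail to be attained and where $L^\ast$ carries no finite integration. At an extremal leaf the first-order condition $N(H)\equiv 0$ already reduces the Riccati equation to the pointwise estimate
\[ \operatorname{div}_L(\nabla_N N) = |\nabla_N N|^2 + |S|^2 + \operatorname{Ric}(N,N) \ge n\bigl(H_{L^\ast}^2 - K_0\bigr), \]
so if $H_{L^\ast}^2 > K_0$ the leaf-divergence of the tangent field $\nabla_N N$ is bounded below by a positive constant on all of $L^\ast$. For a compact leaf this contradicts $\int_{L^\ast}\operatorname{div}_L(\nabla_N N) = 0$; for a complete noncompact leaf, however, a field can sustain positive divergence, and a contradiction requires extra control — a uniform bound on $|\nabla_N N|$ (the deviation of the normal curves from geodesics) together with a volume-growth or transverse-invariant-measure argument, or an application of the Omori--Yau maximum principle, which is available because $\overline M$ is complete with Ricci bounded below. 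Securing such control without an a priori bound on the second fundamental form of the leaves — the very quantity one is trying to estimate — is the crux, and is presumably where the completeness hypothesis and additional structure (such as the Montiel foliations studied here) must enter.
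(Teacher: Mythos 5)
First, a point of status: the statement you were asked to prove is Conjecture~5.1.2 of Meeks--P\'erez--Ros, reproduced in this paper purely as motivation; the paper contains no proof of it. The authors record that Sampaio and Silva settled it when $\overline{M}$ is \emph{compact}, and the paper's own results (Theorems~A and~B) are partial answers under substantial extra structure: a transverse closed conformal unit field $\xi$, a sign condition on the support function $\nu=\overline{g}(\xi,N)$, and integrability ($\mathcal{L}^1$) or polynomial-volume-growth hypotheses on the leaves. So there is no ``paper proof'' to compare yours against; the question is whether your argument closes the conjecture, and it does not.

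The parts you wrote down are correct: your transverse Riccati equation is the standard Bochner-type identity $\mbox{Ric}(N,N)=\mbox{div}(\overline{\nabla}_NN)-N(\mbox{div}\,N)-\mbox{tr}\big((\overline{\nabla}N)^2\big)$ specialized to a unit normal, the block computation giving $\mbox{tr}\big((\overline{\nabla}N)^2\big)=\|S\|^2$ and the splitting $\mbox{div}(\overline{\nabla}_NN)=\mbox{div}_L(\overline{\nabla}_NN)-\|\overline{\nabla}_NN\|^2$ are both right, and the compact-extremal-leaf argument (Stokes plus $\|S\|^2\ge nH^2$ and $\mbox{Ric}(N,N)\ge -nK_0$) is sound. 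The genuine gap is the reduction to that situation, and it has three holes, which your last paragraph only names without filling: (i) for a merely complete $\overline{M}$, $\sup_{\overline{M}}H$ need not be finite, and even if finite need not be attained; (ii) even when it is attained, nothing forces the leaf through a maximum point to be compact --- in the compact ambient case this is exactly what Proposition~2.31 of Barbosa--Kenmotsu--Oshikiri supplies (either $H$ is constant or the set of maxima of $H$ contains a compact leaf), which is the tool this paper's Theorem~B leans on, and no analogue exists for complete noncompact $\overline{M}$; (iii) on a noncompact extremal leaf one must rule out $\mbox{div}_L(\overline{\nabla}_NN)\ge n\big(H^2-K_0\big)>0$, and the divergence theorems available in this setting (Karp, Caminha-type, or the volume-growth principle of Al\'{\i}as--Caminha--do Nascimento invoked in this paper) all require a priori control of $\|\overline{\nabla}_NN\|$, of integrability, or of leaf volume growth --- precisely the kind of control that is unavailable because the second fundamental form is the quantity being estimated, and precisely the hypotheses this paper has to \emph{assume} to get its Theorems~A and~B. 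What you have, therefore, is a clean sketch of the already-known compact case, not a proof of the conjecture, which remains open in the stated generality.
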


Sampaio and Silva provided a positive answer to this conjecture in \cite{SampaioS:2024} (see Theorem 4.6 in \cite{SampaioS:2024}) when $\overline{M}$ is a compact manifold. In this case, they prove even more: If there is a codimension one CMC foliation of a compact Riemannian manifold such that there is $K_0 \geq 0$ and the Ricci curvature of $M$ is bounded from below by $-nK_0$, then for each leaf $L$ of $\mathcal{F}$, the mean curvature $H_L$ of $L$ satisfies $|H_L| \leq \sqrt{K_0}$. Furthermore, if the foliation contains a leaf $L$ whose absolute mean curvature is $|H_L| = \sqrt{K_0}$, then either $K_0 = 0$ and all the leaves of $\mathcal{F}$ are totally geodesic, or $K_0 > 0$ and there is a totally umbilical leaf in $\mathcal{F}$.

In 1986, Solomon \cite{Solomon-1986} posed the following problem:
\begin{problem}\label{problem:minimal_solomon}
	Is every foliation of $\mathbb{R}^{n+1}$, $n\leq 7$, by minimal hypersurfaces a foliation by parallel affine hyperplanes? 
\end{problem}

If the leaves are properly embedded, Problem \ref{problem:minimal_solomon} has an affirmative answer if \(n+1 < 8\) (see \cite[Theorems 1 and 3]{ShoenS:1981}), and a negative answer if \(n+1 > 8\), since in those dimensions there are minimal graphs that are not hyperplanes. In the critical dimension \(n+1 = 8\), Solomon noted in \cite{Solomon-1986} that resolving this question appears quite difficult, even when the leaves are properly embedded. The problem becomes much more complicated when the leaves are not required to be properly embedded.

In 1999, Montiel \cite{montielfoliation:1999} classified hypersurfaces with constant mean curvature on Riemannian manifolds carrying a vector field $\xi$ that is closed and conformal. This type of vector field yields singular Riemannian foliations with totally umbilical leaves on its regular stratum. We call these foliations Montiel Foliations, or simply $\mathcal{F}(\xi)$.

On the other hand, Brasil et al. \cite{BrasilDSS:2022} explored the idea of foliations transverse to a closed conformal vector field at infinity. They obtained a Bernstein-type result for foliations transverse to this structure at infinity. 

Based on the aforementioned discussion, we pose the following question: {\it Given a codimension one foliation transverse to a closed conformal vector field on a Riemannian manifold, what is the relation between the geometry of the leaves and the foliation associated with this vector field?} Our aim is to provide partial answers to these questions and conjectures for transversaly foliation to a conformal and closed vector field. When the foliation has at least one minimal leaf, we have the following:

\begin{thA}\label{theorem:1} Let $\overline{M}^{n+1},$ $n\geq1,$ be a complete Riemannian manifold with non-positive Ricci curvature equipped with an unit vector field $\xi$ that is closed and conformal. Let $\mathcal{F}$ be a codimension one foliation of $\overline{M}$ transverse to field $\xi$. Given a minimal leaf $L$ of $\mathcal{F}$, suppose the support function $\nu=\overline{g}(\xi,N)$ does not change sign on $L.$ Then
	\begin{itemize}
		\item[1.] If $L$ is compact, then $L$ is totally geodesic.
		\item[2.] If $L$ is complete non-compact and $A(\xi^{\top})\in \mathcal{L}^1(\mathcal{F})$, then $L$ is a totally geodesic. In addition, if the inequality
		\begin{eqnarray}\label{desigualdadeNU}
			\nu\geq\dfrac{1}{\|A\|^{2}+1}
		\end{eqnarray}
		holds on $L\in\mathcal{F},$ then $L\in \mathcal{F}(\xi)$.
		\item[3.] Assume $L$ is a complete non-compact leaf with polynomial volume growth. If the Weingarten operator $A$ is bounded and \eqref{desigualdadeNU} holds, then $L$ is totally geodesic and $L\in \mathcal{F}(\xi)$.
	\end{itemize}
\end{thA}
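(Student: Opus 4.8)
The plan is to reduce both conclusions to the single statement $\nu\equiv 1$ on $L$. Since $\xi$ is unit and closed conformal, differentiating $\overline{g}(\xi,\xi)=1$ forces the conformal factor to vanish, so $\xi$ is in fact \emph{parallel}; in particular $\overline{\nabla}_X\xi=0$ and $\overline{R}(\,\cdot\,,\,\cdot\,)\xi=0$, whence $\overline{\mathrm{Ric}}(\xi,\,\cdot\,)\equiv 0$. Writing $\xi=\xi^{\top}+\nu N$ and differentiating $\nu=\overline{g}(\xi,N)$ along $L$ yields $\nabla\nu=-A(\xi^{\top})$. Taking a further divergence and using the Codazzi equation together with $\nabla(\mathrm{tr}\,A)=0$ (minimality) and $\overline{\mathrm{Ric}}(\xi,\cdot)=0$, I would derive the Jacobi-type identity
\begin{equation*}
\Delta\nu=-\bigl(\|A\|^{2}+\overline{\mathrm{Ric}}(N,N)\bigr)\nu .
\end{equation*}
Once $\nu\equiv 1$ is known, $N=\xi$ is parallel, so $A=-\overline{\nabla}N=0$ and $L$ is totally geodesic and orthogonal to $\xi$, i.e. $L\in\mathcal{F}(\xi)$. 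Thus both assertions follow from $\nu\equiv 1$.

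Next I would extract the quantitative content of the hypotheses. Boundedness of $A$, say $\|A\|^{2}\le c$, together with \eqref{desigualdadeNU} gives $1\ge\nu\ge (1+c)^{-1}>0$, so $\nu$ is bounded and bounded away from zero, and $|\nabla\nu|=|A(\xi^{\top})|\le\sqrt{c}$ is bounded as well. The role of polynomial volume growth is to supply the correct noncompact maximum principle: a complete manifold with polynomial volume growth is stochastically complete (Grigor'yan's integral criterion), hence satisfies the weak maximum principle in the sense of Pigola--Rigoli--Setti, \emph{without} any curvature assumption on $L$ itself. I would apply this principle to the bounded function $\nu$ at its infimum $m:=\inf_{L}\nu\in(0,1]$ (equivalently, to $-\nu$ at its supremum): it produces a sequence $x_k\in L$ with $\nu(x_k)\to m$ and $\Delta\nu(x_k)\ge -1/k$. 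Feeding $x_k$ into the identity above, one would like to conclude $\|A\|^{2}(x_k)\,\nu(x_k)\to 0$; since $\nu(x_k)\to m>0$ this forces $\|A\|^{2}(x_k)\to 0$, and then \eqref{desigualdadeNU}, read as $\nu\,\|A\|^{2}\ge 1-\nu$, gives $0\ge 1-m$, i.e. $m\ge 1$. Combined with $\nu\le 1$ this yields $\nu\equiv 1$, closing the argument as above.

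\textbf{Main obstacle.} The delicate point is precisely the sign of the ambient curvature term in the identity for $\Delta\nu$. The extraction above needs $\Delta\nu(x_k)\ge -1/k$ to upgrade into $\|A\|^{2}(x_k)\,\nu(x_k)\to 0$, but the hypothesis $\overline{\mathrm{Ric}}\le 0$ makes $-\overline{\mathrm{Ric}}(N,N)\,\nu\ge 0$ enter with the \emph{unfavorable} sign, so that a priori $\|A\|^{2}(x_k)\,\nu(x_k)\le \tfrac1k-\overline{\mathrm{Ric}}(N,N)(x_k)\,\nu(x_k)$ need not tend to zero. Overcoming this is the crux of part~3: I would exploit the parallel structure, which gives $\overline{\mathrm{Ric}}(N,N)=\nu^{-2}\,\overline{\mathrm{Ric}}(\xi^{\top},\xi^{\top})\le 0$, and try to play it against the lower bound $\|A\|^{2}\ge \nu^{-1}-1$ coming from \eqref{desigualdadeNU}, so that the curvature contribution is absorbed along the extremizing sequence rather than sabotaging the limit. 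This is exactly what separates part~3 from the compact case of part~1 (where one may instead compare $L$ with the totally geodesic slices $\{t\}\times P$ of the local splitting $\overline{M}\simeq\R\times P$ induced by the parallel field $\xi$, and conclude by the geometric maximum principle) and from part~2 (where $A(\xi^{\top})=-\nabla\nu\in\mathcal{L}^{1}(\mathcal{F})$ permits a direct Stokes/divergence argument in the spirit of Caminha's lemma, bypassing the pointwise sign issue). Finally, I would verify that the boundedness of $A$ is genuinely used both to pin $\nu$ uniformly away from $0$ and to keep $\nu$ in the class to which the weak maximum principle applies.
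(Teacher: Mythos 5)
Your structural observations are correct and genuinely sharpen the setup: since $\xi$ is unit and closed conformal, $0=X\,\overline{g}(\xi,\xi)=2\varphi\,\overline{g}(X,\xi)$ forces $\varphi\equiv 0$, so $\xi$ is parallel, $\mbox{Ric}_{\overline{M}}(\xi,\cdot)\equiv 0$, $\nabla\nu=-A(\xi^{\top})$, and your identity $\Delta\nu=-\bigl(\|A\|^{2}+\mbox{Ric}_{\overline{M}}(N,N)\bigr)\nu$ on a minimal leaf is exactly Lemma \ref{prop:eqfundamental} rewritten via $-\mbox{Ric}_{\overline{M}}(\xi^{\top},N)=\nu\,\mbox{Ric}_{\overline{M}}(N,N)$. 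But the proposal is not a proof, for three concrete reasons. First, the announced reduction of everything to $\nu\equiv 1$ cannot work for parts 1 and 2: there exist compact (and complete) totally geodesic leaves transverse to a parallel field with constant $\nu\neq 1$ (tilted planes in $\mathbb{R}^{3}$, as in Example \ref{exemplo1}, or tilted subtori of a flat torus), so part 1 must be proved without forcing $\nu\equiv 1$, and your substitute for it (``compare $L$ with the slices and conclude by the geometric maximum principle'') is a one-line sketch, not an argument. Second, your claim that part 2 proceeds by a Stokes/Caminha argument ``bypassing the pointwise sign issue'' is incorrect: Caminha-type results (Proposition 1 of Camargo--Caminha--Sousa, or Karp's theorem) require precisely that $\mbox{div}_{L}(A(\xi^{\top}))$ not change sign, which is the same sign issue again. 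Third, and decisively, you leave the crux of part 3 open by your own admission; the suggested absorption of $\mbox{Ric}_{\overline{M}}(N,N)=\nu^{-2}\mbox{Ric}_{\overline{M}}(\xi^{\top},\xi^{\top})$ against $\|A\|^{2}\geq \nu^{-1}-1$ does not close, because along your extremizing sequence the inequality $\bigl(\|A\|^{2}+\mbox{Ric}_{\overline{M}}(N,N)\bigr)\nu(x_{k})\leq 1/k$ puts no upper bound on $\|A\|^{2}\nu$ when $\mbox{Ric}_{\overline{M}}(N,N)$ is very negative.

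That said, the obstacle you isolate is not an artifact of your route: it sits, unacknowledged, at the heart of the paper's own proof, which you should be aware of when comparing. All three parts there are obtained by integrating Lemma \ref{prop:eqfundamental} (Stokes in the compact case, Camargo--Caminha--Sousa/Karp in the $\mathcal{L}^{1}$ case, Al\'ias--Caminha--Nascimento in the polynomial volume growth case), and every one of these applications rests on the pointwise inequality \eqref{eq8}, $\mbox{div}_{L}(A(\xi^{\top}))\geq \nu\|A\|^{2}\geq 0$, i.e.\ on the assertion $-\mbox{Ric}_{\overline{M}}(\xi^{\top},N)\geq 0$. Semi-definiteness of a bilinear form gives no sign on off-diagonal entries, and your exact identity shows $-\mbox{Ric}_{\overline{M}}(\xi^{\top},N)=\nu\,\mbox{Ric}_{\overline{M}}(N,N)\leq 0$ when $\nu\geq 0$ and $\mbox{Ric}_{\overline{M}}\leq 0$ --- the reverse of what \eqref{eq8} needs, with equality only where $\mbox{Ric}_{\overline{M}}(N,N)=0$. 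So both the paper's argument and your plan close exactly when the ambient Ricci term enters with the favorable sign (e.g.\ non-negative Ricci curvature, or $\mbox{Ric}_{\overline{M}}(N,N)\equiv 0$ along $L$, as in the flat model), and both hit the same wall under the stated non-positivity. In short: you correctly pinpointed the critical step --- indeed, the very step the paper does not justify --- but you did not overcome it, and completing parts 1 and 2 is not optional either; as it stands the proposal is a plan whose hardest step is missing.
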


Czarnecki \cite{czar} proved that in an Hadamard manifold with sectional curvature $k$, with $k\leq -a^2$ and $a>0$, a foliation such that the norm of the second fundamental form of its leaves is bounded by $a$ must be Hadamard itself, meaning all of its leaves are Hadamard manifolds. The proof of Theorem A, specifically the proof of its first assertion, makes clear the presence of planes with zero sectional curvatures.   

If we have a foliation by constant mean curvature hypersurfaces, we have a rigidity result for compact manifolds having non-positive Ricci curvature. 

\begin{thB}
Let $\overline{M}^{n+1}$ be a compact, orientable Riemannian manifold with non-positive Ricci curvature equipped with a Montiel Foliation $\mathcal{F}(\xi)$. Let $\mathcal{F}$ be a CMC foliation of $\overline{M}^{n+1}$ transverse to $\xi$ and $L\in\mathcal{F}$. Assume that the support function $\nu=\overline{g}(\xi,N)$ does not change sign.
\begin{itemize}
	\item[1.] If $L$ is compact, then $L$ is totally geodesic.
	\item[2.] If $L$ is complete non-compact and the vector field $X=A(\xi^{\top})-\mathcal{H}\xi^{\top}$ is Lebesgue integrable on  $L,$ then $L$ is totally geodesic.
\end{itemize}
In particular, if $\nu\geq 1/\left( \|A\|^{2}+1\right),$ then $\mathcal{F}$ coincides with $\mathcal{F}(\xi)$.
\begin{itemize}
	\item[3.] Suppose $L$ is a complete non-compact leaf  with polynomial volume growth, $\|A(\xi^{\top})\|^2\geq \mathcal{H} \overline{g}( A(\xi^{\top}),\xi^{\top})$ and $\nu\geq \frac{1}{\|A\|^{2}+1}$ on $L$. If the Weingarten operator $A$ is bounded, then $L\in \mathcal{F}(\xi)$.
\end{itemize}
\end{thB}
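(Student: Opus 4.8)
The plan is to run, for each of the three regimes, a divergence argument built on the same structure identities for the transverse field, and then to convert the resulting sign information first into totally geodesic behaviour and finally into membership in $\mathcal F(\xi)$. First I would record the pointwise identities coming from the closed conformal structure, exactly as in the proof of Theorem A. Writing $\xi=\xi^{\top}+\nu N$ with $N$ the unit normal to $\mathcal F$, the structure equations give $\nabla\nu=-A(\xi^{\top})$ and $\nabla_{X}\xi^{\top}=\nu A(X)$, whence $\operatorname{div}(\xi^{\top})=\nu\operatorname{tr}A=n\mathcal H\nu$. Using the Codazzi equation, the control of $\overline{\operatorname{Ric}}(\xi,\cdot)$ by the conformal structure, and constancy of $\mathcal H$, a direct computation yields the central identity
\begin{equation*}
\operatorname{div}\bigl(A(\xi^{\top})-\mathcal H\,\xi^{\top}\bigr)=\nu\Bigl(\lVert A-\mathcal H\,\mathrm{Id}\rVert^{2}-\overline{\operatorname{Ric}}(N,N)\Bigr).
\end{equation*}
Since $\overline M$ has non-positive Ricci curvature the bracket is non-negative, so $\operatorname{div}X$ has the same constant sign as $\nu$; transversality guarantees $\nu$ is nowhere zero, so after possibly replacing $N$ by $-N$ I may assume $\nu>0$ and $\operatorname{div}X\ge 0$.

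For item~1, integrating $\operatorname{div}(\xi^{\top})=n\mathcal H\nu$ over the closed leaf $L$ forces $\mathcal H\int_{L}\nu=0$; as $\nu>0$ this gives $\mathcal H=0$, reducing to the minimal case. Integrating the central identity over $L$ then makes the non-negative integrand vanish, so $\lVert A\rVert^{2}\equiv 0$ and $L$ is totally geodesic (and incidentally $\overline{\operatorname{Ric}}(N,N)\equiv 0$, exhibiting the flat planes through $N$ noted after Theorem A). For item~2 I would instead appeal to the $\mathcal L^{1}$-divergence criterion on complete manifolds: since $\lvert X\rvert\in\mathcal L^{1}(L)$ and $\operatorname{div}X$ does not change sign, $\operatorname{div}X\equiv 0$, so $L$ is totally umbilical, $A=\mathcal H\,\mathrm{Id}$. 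The remaining point is to upgrade umbilicity to totally geodesic, and here the structure identities give $\operatorname{Hess}\nu=-\mathcal H^{2}\nu\,g$ on $L$; by the Obata--Tashiro classification a complete noncompact leaf cannot carry a nonconstant solution of this equation with $\mathcal H\ne 0$, while the constant case is incompatible with $\nu>0$ unless $\mathcal H=0$. Hence $A\equiv 0$.

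For item~3, boundedness of $A$ first makes $\nu$ bounded below, $\nu\ge 1/(\lVert A\rVert^{2}+1)\ge\nu_{0}>0$, and bounded above by $\lvert\xi\rvert$; moreover $\Delta\nu=\nu\bigl(\overline{\operatorname{Ric}}(N,N)-\lVert A\rVert^{2}\bigr)\le 0$. I would test this inequality against $\psi^{2}$, with $\psi$ a standard cutoff supported in $B_{2r}$, equal to $1$ on $B_{r}$ and with $\lvert\nabla\psi\rvert\le C/r$; integrating by parts, using $\lvert\nabla\nu\rvert^{2}=\lVert A(\xi^{\top})\rVert^{2}\le\lVert A\rVert^{2}$ together with the hypothesis $\lVert A(\xi^{\top})\rVert^{2}\ge\mathcal H\,\overline g(A(\xi^{\top}),\xi^{\top})$ to sign the cross term, I obtain a Caccioppoli estimate $\int_{B_{r}}\lvert\nabla\nu\rvert^{2}\le C\nu_{0}^{-2}\int\lvert\nabla\psi\rvert^{2}$, whose right-hand side is absorbed by the polynomial volume growth of $L$ as $r\to\infty$. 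This forces $\nabla\nu\equiv 0$; with $\nu$ constant, $\Delta\nu=0$ gives $\lVert A\rVert^{2}=\overline{\operatorname{Ric}}(N,N)\le 0$, so $L$ is totally geodesic. Finally, in every item, once $A\equiv 0$ the bound $\nu\ge 1/(\lVert A\rVert^{2}+1)=1$ combined with $\nu\le\lvert\xi\rvert=1$ yields $\nu\equiv 1$, i.e.\ $\xi^{\top}\equiv 0$, which is exactly the assertion that $L$ is a leaf of $\mathcal F(\xi)$.

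I expect the genuine difficulties to lie in the two complete noncompact regimes. In item~2 the subtle step is the passage from umbilic to geodesic, where one must invoke the rigidity of the Obata--Tashiro equation rather than a naive integration, since $\operatorname{div}(\xi^{\top})$ need not be integrable there. In item~3 the delicate point is matching the cutoff estimate to the precise notion of polynomial volume growth: the technical inequality on $A(\xi^{\top})$ is precisely what signs the cross term in the integration by parts, and the exhaustion must be chosen so that the cutoff gradient term is killed by the assumed volume bound.
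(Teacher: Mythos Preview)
Your central divergence identity carries a sign error that undermines the whole scheme. Starting from the paper's Lemma~3.1 and using $\operatorname{div}_L\xi^{\top}=n\nu\mathcal H$, $\xi^{\top}(\mathcal H)=0$ and, for unit $\xi$, $\overline{\operatorname{Ric}}(\xi,\cdot)=0$, one obtains
\[
\operatorname{div}_L\bigl(A(\xi^{\top})-\mathcal H\,\xi^{\top}\bigr)=\nu\bigl(\lVert A-\mathcal H\,\mathrm{Id}\rVert^{2}+\overline{\operatorname{Ric}}(N,N)\bigr),
\]
with a \emph{plus} in front of $\overline{\operatorname{Ric}}(N,N)$, not a minus. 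Under non-positive Ricci that second summand is $\le 0$, so you cannot conclude $\operatorname{div}_L X\ge 0$, and the sign/integrability arguments in items~1 and~2 never start. The paper does not pass to $\overline{\operatorname{Ric}}(N,N)$ at all: it keeps the term $-\overline{\operatorname{Ric}}(\xi^{\top},N)$ from Lemma~3.1, works with the formula $\operatorname{div}_L X=-\overline{\operatorname{Ric}}(\xi^{\top},N)+\nu\lVert A\rVert^{2}$, and uses a global step (Barbosa--Kenmotsu--Oshikiri, exploiting compactness of $\overline M$) to force the leafwise constant $\mathcal H$ to be a single constant on $\overline M$ before integrating. Your alternative route for item~2 via $\operatorname{Hess}\nu=-\mathcal H^{2}\nu g$ and Obata--Tashiro is an interesting idea, but it rests on the umbilicity conclusion that you no longer obtain once the sign is corrected.

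The argument for item~3 has an independent gap. A Caccioppoli bound $\int_{B_r}\lvert\nabla\nu\rvert^{2}\le C\nu_0^{-2}\int\lvert\nabla\psi\rvert^{2}\le C'r^{-2}\operatorname{vol}(B_{2r})$ does \emph{not} tend to zero for arbitrary polynomial volume growth (any degree $\ge 2$ already fails), so you cannot deduce $\nabla\nu\equiv 0$ this way; and the vague ``signing the cross term'' does not match what the technical hypothesis $\lVert A(\xi^{\top})\rVert^{2}\ge\mathcal H\,\overline g(A(\xi^{\top}),\xi^{\top})$ is actually for. In the paper one sets $f=1-\nu\ge 0$, observes that $\nabla f=A(\xi^{\top})$ and hence $\overline g(\nabla f,X)=\lVert A(\xi^{\top})\rVert^{2}-\mathcal H\,\overline g(A(\xi^{\top}),\xi^{\top})\ge 0$ (this is where the hypothesis enters), combines Lemma~3.1 with $\nu\ge 1/(\lVert A\rVert^{2}+1)$ to get $\operatorname{div}_L X\ge f$, and then applies the maximum principle of Al\'ias--Caminha--do~Nascimento for bounded vector fields on complete leaves of polynomial volume growth to force $f\equiv 0$, i.e.\ $\nu\equiv 1$.
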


Finally, we define a metric invariant in order to characterize totally geodesic leaves that are transverse to a closed conformal vector field on
Riemannian manifolds. More precisely:
$$\mathcal{G}_{L}^{\xi} := \sup_{p\in L} \left\{\mbox{div}_{L}\left(A(\xi^{\top})-\mathcal{H}\xi^{\top} \right) + \mbox{Ric}_{\overline{M}}\left(\xi^{\top},N\right) \right\}.$$

There are similar results in the literature, for instance see \cite{ChavesS-2020}, \cite{Gomes-2004} and \cite{SilvaSR:2023}.

\begin{thC}\label{TeoInvariante}
	Let $\overline{M}^{n+1}$, $ n\geq1$, be a Riemannian manifold equipped with Montiel foliation $\mathcal{F}(\xi)$. Let $\mathcal{F}$ be a codimension one foliation transverse to $\xi$ and $L\in\mathcal{F}$ a complete leaf. If $\nu\geq\nu_{0}>0$ and $\dim L\geq \frac{1}{\nu_{0}},$ then
	\begin{eqnarray} 
		\label{eqGmaiorqueH}
		\mathcal{H}_L^2\leq \mathcal{G}_{L}^{\xi}\,\,
	\end{eqnarray} and
	\begin{equation*}
		L \,\text{is totally geodesic, if and only if,}\ \mathcal{G}_{L}^{\xi}=0.	
	\end{equation*}

\end{thC}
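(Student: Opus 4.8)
The plan is to reduce everything to a single pointwise divergence identity for the field $W:=A(\xi^{\top})-\mathcal{H}\,\xi^{\top}$ and then feed it into the two hypotheses $\nu\ge\nu_{0}>0$ and $\dim L\ge 1/\nu_{0}$. First I would record the structure equations forced by $\xi$ being closed and conformal, $\overline{\nabla}_{X}\xi=\phi X$. Splitting $\xi=\xi^{\top}+\nu N$ along $L$ and using the Gauss--Weingarten formulas gives the two identities $\nabla_{X}\xi^{\top}=\phi X+\nu A(X)$ and $\nabla\nu=-A(\xi^{\top})$; differentiating once more yields $\overline{R}(X,Y)\xi=d\phi(X)Y-d\phi(Y)X$, hence $\mbox{Ric}_{\overline{M}}(\cdot,\xi)=-n\,d\phi$, which I will use to rewrite $\mbox{Ric}_{\overline{M}}(\xi^{\top},N)$.

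Next I would compute $\mbox{div}_{L}W$ with the contracted Codazzi equation $(\mbox{div}_{L}A)(\xi^{\top})=\xi^{\top}(\mbox{tr}\,A)-\mbox{Ric}_{\overline{M}}(\xi^{\top},N)$ together with $\nabla_{X}\xi^{\top}=\phi X+\nu A(X)$. The point is that the conformal factor $\phi$ cancels and the ambient Ricci term is exactly the one added in the definition of $\mathcal{G}_{L}^{\xi}$, so that the integrand collapses to
\begin{equation*}
\mbox{div}_{L}\big(A(\xi^{\top})-\mathcal{H}\,\xi^{\top}\big)+\mbox{Ric}_{\overline{M}}(\xi^{\top},N)=(n-1)\,\xi^{\top}(\mathcal{H})+\nu\big(\|A\|^{2}-n\mathcal{H}^{2}\big).
\end{equation*}
Here $\|A\|^{2}-n\mathcal{H}^{2}=\|A-\mathcal{H}\,\mbox{Id}\|^{2}\ge 0$ is the total umbilicity, nonnegative by Cauchy--Schwarz, with equality exactly at umbilic points.

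To prove $\mathcal{H}_{L}^{2}\le\mathcal{G}_{L}^{\xi}$ I would exploit $\nabla\nu=-A(\xi^{\top})$ through a generalized maximum principle applied to the support function. Since $\nu$ is bounded below by $\nu_{0}>0$ on the complete leaf $L$, I expect to produce a sequence (or, when $L$ is compact, a point) along which $\nabla\nu\to 0$, i.e. $A(\xi^{\top})\to 0$; there $\xi^{\top}$ becomes a null eigenvector of $A$, so $-\mathcal{H}$ is an eigenvalue of the trace-free operator $A-\mathcal{H}\,\mbox{Id}$, whence $\|A-\mathcal{H}\,\mbox{Id}\|^{2}\ge\frac{n}{n-1}\mathcal{H}^{2}$. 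Combined with $\nu\ge\nu_{0}$ and the threshold $n\ge 1/\nu_{0}$, I expect the quadratic term to control $\mathcal{H}^{2}$ while the gradient term $(n-1)\xi^{\top}(\mathcal{H})$ stays negligible along the sequence; passing to the supremum then gives $\mathcal{H}_{L}^{2}\le\mathcal{G}_{L}^{\xi}$.

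Finally, for the characterization: if $L$ is totally geodesic then $A\equiv 0$ makes the integrand vanish identically, so $\mathcal{G}_{L}^{\xi}=0$. Conversely, $\mathcal{G}_{L}^{\xi}=0$ forces $\mathcal{H}_{L}^{2}\le 0$ by the inequality just proved, hence $\mathcal{H}\equiv 0$; the integrand then reduces to $\nu\|A\|^{2}\ge\nu_{0}\|A\|^{2}\ge 0$ with vanishing supremum, which forces $A\equiv 0$. I expect the genuine obstacle to lie in the third paragraph: taming the mean-curvature gradient term $(n-1)\xi^{\top}(\mathcal{H})$ on a noncompact leaf without a CMC hypothesis, and matching the exact constant so that $\dim L\ge 1/\nu_{0}$ is precisely what upgrades the Cauchy--Schwarz estimate into $\mathcal{H}_{L}^{2}\le\mathcal{G}_{L}^{\xi}$; a suitable test function combining $\nu$ and $\mathcal{H}$ is likely needed so that both the gradient term and the umbilicity bound cooperate along one maximizing sequence.
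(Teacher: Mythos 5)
Your preliminary reductions are fine --- the identities $\nabla\nu=-A(\xi^{\top})$, $\mbox{Ric}_{\overline{M}}(\cdot,\xi)=-n\,d\varphi$, and the collapse of the quantity inside the supremum to $(n-1)\,\xi^{\top}(\mathcal{H})+\nu\left(\|A\|^{2}-n\mathcal{H}^{2}\right)$ are all correct, and your last paragraph (the equivalence with total geodesy) would indeed follow formally once the inequality \eqref{eqGmaiorqueH} is established. But the proof of that inequality is the heart of the theorem, and the gap you flag yourself is fatal to your strategy rather than a technical loose end. A sequence $p_{k}$ with $\nabla\nu(p_{k})=-A(\xi^{\top})(p_{k})\to 0$ carries no information about $\xi^{\top}(\mathcal{H})$ at $p_{k}$, nor any reason why $\mathcal{H}(p_{k})$ should approach $\sup_{L}|\mathcal{H}|$: the points where $\nu$ is almost extremal and the points where $\mathcal{H}$ is large are unrelated, so ``passing to the supremum'' cannot close the argument. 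There are two further problems: producing such a sequence at all requires either an Omori--Yau type hypothesis (Ricci curvature of $L$ bounded below, which is not assumed) or Ekeland's principle applied at $\inf\nu$; and your eigenvalue estimate $\|A-\mathcal{H}\,\mbox{Id}\|^{2}\geq\frac{n}{n-1}\mathcal{H}^{2}$ needs $\xi^{\top}$ bounded away from zero along the sequence, which fails exactly where $\xi$ becomes close to normal to $L$.

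The paper's resolution of precisely this difficulty is the idea your proposal is missing: do not try to make the first-order term negligible at special points; instead restrict everything to an integral curve $\gamma$ of $\xi^{\top}$, where $\xi^{\top}(n\mathcal{H})=n\mathcal{H}'_{\gamma}$ is a genuine derivative. Writing $\mathcal{G}_{L}^{\xi}=a^{2}$, the definition of $\mathcal{G}_{L}^{\xi}$ as a supremum turns the fundamental identity of Lemma \ref{prop:eqfundamental} into a Riccati-type differential inequality, roughly $n\mathcal{H}'_{\gamma(s)}\leq a^{2}-\mathcal{H}^{2}_{\gamma(s)}$, where the hypotheses $\nu\geq\nu_{0}$ and $\dim L\geq 1/\nu_{0}$ enter only to guarantee $n\nu\mathcal{H}^{2}\geq\mathcal{H}^{2}$. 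If $\mathcal{H}^{2}>a^{2}$ at some point, solutions of this inequality blow up in finite time; the paper makes this precise by applying the Mean Value Theorem to $G(s)=-1/\mathcal{H}_{\gamma(s)}$ (case $a=0$) and to $h(s)=\frac{1}{2a}\ln\left(\frac{\mathcal{H}_{\gamma(s)}-a}{\mathcal{H}_{\gamma(s)}+a}\right)$ (case $a>0$), contradicting the fact that $\gamma$ extends to all $s\in\mathbb{R}$ by completeness. This ODE-comparison step along the flow of $\xi^{\top}$ is what replaces your third paragraph; without it, or a genuinely constructed substitute (the ``suitable test function combining $\nu$ and $\mathcal{H}$'' you gesture at but never exhibit), the proposal does not prove the theorem.
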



%

\section{Preliminaries}\label{preliminaries}
In this section, we introduce some basics facts, definitions and notations.

\subsection{Riemannian Manifolds and Foliations}\label{Riemannian Manifolds}
Let $\overline{M}$ be an Riemannian manifold with dimension $n+1$, $n\geq1$, with the Riemannian metric denoted by $\overline{g}$. 

Let $\mathcal{F}$ be an oriented and transversely oriented codimension one foliation on $\overline{M}$. We can choose an orthonormal adapted frame $\{ e_{1}, \,\hdots\,, e_{n}, e_{n+1}\}$ around any $p\in \overline{M}$ where $e_{1}, \,\hdots\,, e_{n}$ are tangent to $\mathcal{F}$ and $e_{n+1}$ is normal to the leaves of $\mathcal{F}.$ There is a convention of indices, $1\leq A,B,C,\, \hdots \leq n+1$ and $1\leq i, j, k,\, \hdots \leq n.$ 

We say that $\mathcal{F}$ is a transverse foliation to a submanifold $S\subset \overline{M}$ if for each leaf $L\in\mathcal{F}$ and for each point $p\in L\cap S,$ we have $$T_{p}L+T_{p}S=T_{p}\overline{M}.$$   

In addition, we say that the foliations $\mathcal{F}_{1}$ and $\mathcal{F}_{2}$ are transverse if $\mathcal{F}_{1}$ is transverse to each one of the leaves $L\in \mathcal{F}_{2}.$

We define the second fundamental form $\mathcal{B}$ of the leaf  $L\in \mathcal{F}$ in the direction of $e_{n+1}=N$ by $\mathcal{B}(e_{i},e_{j})=\overline{g}\left(-\overline{\nabla}_{e_{i}}N,e_{j} \right)$, with associated Weingarten operator $A(e_{i})=-\left(\overline{\nabla}_{e_{i}}N\right)^{\top}.$

The norm of second fundamental form $\mathcal{B}$ is $$\|\mathcal{B}\|^{2}=\left( \sum_{i,j}\overline{g}\left( -\overline{\nabla}_{}e_{i}N, e_{j} \right)^{2} \right)^{\frac{1}{2}}.$$ 

The mean curvature vector field is defined by $$\vec{H}=\dfrac{1}{n}\sum_{i}\left(\overline{\nabla}_{e_{i}}e_{i}\right)^{\perp},$$ 
and the mean curvature function corresponding to $N$ is $\mathcal{H}=\frac{1}{n}tr(A)$.

Given smooth vector fields $X$ and $Y$, the Ricci curvature in the directions of $X$ and $Y,$ is
$$\mbox{Ric}_{\overline{M}}(X,Y)=\sum_{A}\overline{g}\left( \overline{R}(e_{A},X)Y,e_{A} \right).$$ 

For smooth vector field $X$ in $T\overline{M},$ we can define the divergent of the leaf  $L\in\mathcal{F}$ and the divergent of $\overline{M}$, respectively
$$\mbox{div}_{L}(X)=\sum_{i}\overline{g}\left(\overline{\nabla}_{e_{i}}X,e_{i}\right)$$ and 
$$\mbox{div}(X)=\mbox{div}_{L}(X)+\overline{g}\left( \overline{\nabla}_{N}N,N \right).$$

Let $L$ be a leaf of the foliation $\mathcal{F}$. We will say $L$ is minimal, totally geodesic or totally umbilic, if $\mathcal{H}_{L}=0,$ $\mathcal{B}\equiv0$ or $\mathcal{B}(X,Y)=\lambda\overline{g}\left(X,Y\right),$ for all $X,Y\in TL$ and $\lambda\in C^{\infty}(\overline{M})$, respectively.  The foliation $\mathcal{F}$ is called minimal, totally geodesic or totally umbilic if each leaf  $L\in\mathcal{F}$ is, respectively, minimal, totally geodesic or totally umbilic.

We say that $\|X\| \in \mathcal{L}^1(\mathcal{F})$ if and only if $\|X^{\top}\| \in \mathcal{L}^1(L)$ for each leaf $L$ of the foliation $\mathcal{F}$, where $\mathcal{L}^1(L)$ is the space of Lebesgue integrable functions, and $X^{\top}$ represents the tangent projection of the field $X$ onto the leaf $L$.

\subsection{Closed conformal fields}\label{closed conformal fields}

Suppose that $\overline{M}$ is equipped with a closed and conformal vector field $\xi$, that is, there is a $\varphi\in {C^{\infty}(\overline{M})}$ such that 
\begin{equation}
	\overline{\nabla}_{X}\xi=\varphi X,\,\,\,\forall X\in T\overline{M},
\end{equation}
where $\overline{\nabla}$ is a Levi-Civita connection on $\left(\overline{M},\overline{g}\right).$ 

Proposition 1 of \cite{montielfoliation:1999} exhibits some properties of manifolds carrying a non-trivial $\xi$:
\begin{itemize}
	\item[(a)] The set of points $p\in\overline{M}$ such that $\xi_{p}=0,$ denoted for $Z(\xi),$ is a discret set;
	 \item[(b)] The unit vector $\xi_{0}=\dfrac{\xi}{\|\xi\|}$ defined on open dense set $\widetilde{M}=\overline{M}\setminus Z(\xi)$ is such that
	 $$\overline{\nabla}_{\xi_{0}}\xi_{0}=0,\,\, \text{and if}\,\, \overline{g}(X,\xi_{0})=0,\,\, \text{then}\,\,\overline{\nabla}_{X}\xi_{0}=\dfrac{\varphi}{\|\xi_{0}\|}X.$$ In particular, the flow of $\xi$ is a geodesic flow; and,
	 \item[(c)] The $n-$dimensional distribution $\mathcal{D}$ defined by
	 $$p\in\overline{M} \mapsto \mathcal{D}(p)=\left\{ X\in T_{p}\overline{M}\,|\,\overline{g}(\xi_{p},X)=0 \right\}$$ is integrable to a codimension one umbilical Riemannian foliation $\mathcal{F}(\xi)$, oriented by $\xi$. In addiction, the functions $\|\xi\|$, $\mbox{div}(\xi)$ and $\xi(\varphi)$ are constants on connected leaves of $\mathcal{F}(\xi)$ and each leaf $L$ has constant mean curvature $\mathcal{H}_{L}=-\dfrac{\mbox{div}(\xi)}{(n+1)\|\xi\|}.$
\end{itemize} 

We call $\mathcal{F}(\xi)$ a \textit{Montiel Foliation generated by the vector field $\xi$} or simply \textit{Montiel Foliation}. Montiel classified all complete Riemannian manifolds equipped with a non-trivial closed conformal vector field $\xi$. We refer to \cite{montielfoliation:1999} for a detailed discussion. 


\subsection{Volume growth}

Consider $\overline{M}$ an oriented complete (non-compact) and connected Riemannian manifold and $B(p,t)$ the geodesic ball centered at $p\in\overline{M}$ with radius $t$.

Let $g$ and $f$ be real functions defined on $(a,+\infty)$, $a\in \mathbb{R}$, with $g$ strictly positive. If there are constants $x_{0}\in (a,+\infty)$ and $M>0$ such that $\|f(x)\|\leq M g(x),$ for all $x\geq x_0$,  we write $f=\mathcal{O}(g)$.   

We say that $\overline{M}$ has polynomial volume growth if there is a point $p\in\overline{M}$ such that 
$$\mbox{vol}\left( B(p,t) \right)=\mathcal{O}\left(P(t)\right)$$ 
when $t\rightarrow +\infty$, where $\mbox{vol}$ denote the Riemannian volume and $P>0$ is a polynomial function.

\section{Montiel and Transverse Foliations}

The next lemma was proved in \cite{BrasilDSS:2022}. For reader's comfort, we are going to present a geometric and direct proof of this result.
\begin{lemma}\label{prop:eqfundamental}
	Let $\overline{M}^{n+1}$ an oriented connected Riemannian manifold equipped with Montiel Foliation $\mathcal{F}(\xi)$.  Let $\mathcal{F}$ be a codimention one foliation transverse to field $\xi.$ Then, for all  $L\in\mathcal{F},$
	\begin{equation}\label{equacaofundamental}
		\mbox{div}_{L}\left( A\left(\xi^{\top}\right)\right)=-\mbox{Ric}_{\overline{M}}(\xi^{\top},N)+\xi^{\top}(n\mathcal{H})+\nu\|A\|^{2}+n\varphi \mathcal{H},
	\end{equation}
where $A$ is the Weingarten operator of $L$, $\mathcal{H}$ is the mean curvature function of the leaf $L$, $\nu$ is the support function $\nu=\overline{g}(\xi,N)$ and $\xi^{\top}$ denotes the orthogonal projection of $\xi$ on $TL$.
\end{lemma}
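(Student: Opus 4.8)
The plan is to compute $\mbox{div}_{L}(A(\xi^{\top}))$ pointwise in an orthonormal frame $\{e_{1},\dots,e_{n}\}$ tangent to $L$, using only the closed-conformal identity $\overline{\nabla}_{X}\xi=\varphi X$ together with the Codazzi equation. Writing $\nabla^{L}$ for the Levi-Civita connection of $L$ and decomposing $\xi=\xi^{\top}+\nu N$, I note that $A(\xi^{\top})$ and each $e_{i}$ are tangent, so the normal part of $\overline{\nabla}_{e_{i}}(A(\xi^{\top}))$ does not contribute. Applying the Leibniz rule for the $(1,1)$-tensor $A$,
\begin{equation*}
	\mbox{div}_{L}(A(\xi^{\top}))=\sum_{i}\overline{g}\!\left(\nabla^{L}_{e_{i}}(A(\xi^{\top})),e_{i}\right)=\sum_{i}\overline{g}\!\left((\nabla^{L}_{e_{i}}A)(\xi^{\top}),e_{i}\right)+\sum_{i}\overline{g}\!\left(A(\nabla^{L}_{e_{i}}\xi^{\top}),e_{i}\right),
\end{equation*}
and I will treat the two sums separately.

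For the first sum I would contract the Codazzi equation. Deriving it from the Gauss formula together with $\overline{\nabla}_{e_{i}}N=-A(e_{i})$ yields $\overline{R}(X,Y)N=(\nabla^{L}_{Y}A)(X)-(\nabla^{L}_{X}A)(Y)$ for tangent $X,Y$, the normal components canceling by the symmetry of $A$. Setting $X=e_{i}$, $Y=\xi^{\top}$ and summing the pairing with $e_{i}$ gives
\begin{equation*}
	\sum_{i}\overline{g}\!\left((\nabla^{L}_{e_{i}}A)(\xi^{\top}),e_{i}\right)=\mathrm{tr}(\nabla^{L}_{\xi^{\top}}A)-\sum_{i}\overline{g}\!\left(\overline{R}(e_{i},\xi^{\top})N,e_{i}\right)=\xi^{\top}(n\mathcal{H})-\mbox{Ric}_{\overline{M}}(\xi^{\top},N),
\end{equation*}
where I use $\mathrm{tr}(A)=n\mathcal{H}$, and the fact that the only missing term $\overline{g}(\overline{R}(N,\xi^{\top})N,N)$ vanishes by antisymmetry, so the tangential sum recovers the full Ricci curvature.

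For the second sum I would invoke the structure of $\xi$. From $\xi^{\top}=\xi-\nu N$, the conformal identity, and $\overline{\nabla}_{e_{i}}N=-A(e_{i})$, the tangential projection is
\begin{equation*}
	\nabla^{L}_{e_{i}}\xi^{\top}=\left(\varphi e_{i}-(e_{i}\nu)N+\nu A(e_{i})\right)^{\top}=\varphi e_{i}+\nu A(e_{i}).
\end{equation*}
Pairing with $A(e_{i})$ and using the symmetry of $A$,
\begin{equation*}
	\sum_{i}\overline{g}\!\left(A(\nabla^{L}_{e_{i}}\xi^{\top}),e_{i}\right)=\varphi\,\mathrm{tr}(A)+\nu\sum_{i}\|A(e_{i})\|^{2}=n\varphi\mathcal{H}+\nu\|A\|^{2}.
\end{equation*}
Adding the two sums produces exactly \eqref{equacaofundamental}.

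The main obstacle is purely one of bookkeeping: fixing the signs in the Codazzi equation and in the Ricci contraction under the paper's conventions $A(X)=-(\overline{\nabla}_{X}N)^{\top}$ and $\mbox{Ric}_{\overline{M}}(X,Y)=\sum_{A}\overline{g}(\overline{R}(e_{A},X)Y,e_{A})$, and verifying that the normal components indeed cancel when deriving Codazzi. No global, topological, or analytic hypotheses enter; the identity is pointwise and follows entirely from the two structural equations above, which is what makes the announced \emph{geometric and direct} proof possible.
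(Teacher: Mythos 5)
Your proof is correct, and its substance coincides with the paper's: both reduce $\mbox{div}_{L}(A(\xi^{\top}))$ to the same three contributions, namely the ambient curvature term $-\mbox{Ric}_{\overline{M}}(\xi^{\top},N)$, the trace term $\xi^{\top}(n\mathcal{H})$, and the algebraic terms $n\varphi\mathcal{H}+\nu\|A\|^{2}$ produced by $\overline{\nabla}\xi=\varphi\,\mathrm{Id}$ together with the symmetry of $A$. The difference is one of organization rather than of ideas. The paper fixes a frame parallel along $\xi^{\top}$ at a point, expands the divergence through the ambient commutator identity defining $\overline{R}(e_{i},\xi^{\top})N$, and uses integrability (so that $[\xi^{\top},e_{i}]\in T\mathcal{F}$) plus the symmetry of $\mathcal{B}$ to evaluate the commutator sum as $\varphi(n\mathcal{H})+\nu\|A\|^{2}$. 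You instead split by the Leibniz rule for the $(1,1)$-tensor $A$ and invoke the Codazzi equation, which is precisely that same commutator computation packaged as a standard identity; your second sum, computed from $\nabla^{L}_{e_{i}}\xi^{\top}=\varphi e_{i}+\nu A(e_{i})$, is the paper's commutator term in disguise. What your version buys is frame-independence: you never need the special frame with $\nabla_{\xi^{\top}}e_{i}=0$ (a condition the paper states somewhat loosely, since only the tangential projection at a single point can be made to vanish), and the two contraction facts you use, $\mathrm{tr}(\nabla^{L}_{\xi^{\top}}A)=\xi^{\top}(\mathrm{tr}A)$ and $\overline{g}(\overline{R}(N,\xi^{\top})N,N)=0$, handle the bookkeeping cleanly; your sign conventions for Codazzi and for the Ricci contraction are consistent with the paper's, so the identity comes out exactly as \eqref{equacaofundamental}.
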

\begin{proof}
	Let $p\in L$ and $\{e_{i}\}_{i=1}^{n}$ be an orthonormal geodesic local frame on a neighborhood of $p$, $\left(\overline{\nabla}_{\xi^{\top}} e_{i}\right)=\nabla_{\xi^{\top}}e_{i}\equiv0$, where $\overline{\nabla}$ and $\nabla$ are the Levi-Civita connections of $\overline{M}$ and $L$, respectively. Expanding the leaf divergence of $A\left(\xi^{\top}\right)$,
	\begin{eqnarray}\label{eqfundamental-E1}
	\mbox{div}_{L}\left( A\left(\xi^{\top}\right)\right)= -\mbox{Ric}_{\overline{M}}(\xi^{\top},N)+\xi^{\top}\left(n\mathcal{H}\right)+\sum^{n}_{i=1}\overline{g}\left( \nabla_{[\xi^{\top},e_{i}]}N,e_{i} \right).\nonumber
	\end{eqnarray}

Since $\mathcal{F}$ is integrable, $[\xi^{\top},e_{i}]\in T\mathcal{F}$, for all $i=1,\hdots, n.$ The symmetry of the second fundamental form implies that $\sum^{n}_{i=1}\overline{g}\left( \nabla_{[\xi^{\top},e_{i}]}N,e_{i} \right)=\varphi(n\mathcal{H})+\nu\|A\|^{2}$, finishing the proof. 
\end{proof}
 
Equation $\eqref{equacaofundamental}$ allows us to relate geometric aspects of an arbitrary codimension one transverse foliation $\mathcal{F}$ and the closed conformal vector field $\xi$.

\section{Proof of main results}

\subsection{Proof of Theorem A}
\begin{proof}
	
	Let $L\in \mathcal{F}$. From the Lemma $\ref{prop:eqfundamental}$, 
	\begin{eqnarray*}
		\mbox{div}_{L}\left(A(\xi^{\top})\right)=-\mbox{Ric}_{\overline{M}}(\xi^{\top},N)+\xi^{\top}(n\mathcal{H})+\nu\|A\|^{2}+n\varphi \mathcal{H}.
	\end{eqnarray*} 
If $L$ is minimal and $\nu$ does not change sign, then we can write the following inequality 
	\begin{eqnarray}\label{eq8}
	\mbox{div}_{L}\left(A(\xi^{\top})\right)\geq\nu\|A\|^{2}\geq0.
\end{eqnarray}
In addiction, as a direct consequence of the Cauchy-Schwarz inequality,
	\begin{eqnarray}\label{eq9}
	\nu=\overline{g}(\xi,N)\leq |\overline{g}(\xi,N)|\leq \|\xi\| \|N\|\leq 1  \iff
	0\leq 1-\nu.
\end{eqnarray} 


	{\it Case 1}: If $L$ is a compact leaf, Stoke's Theorem together with inequality $\eqref{eq8}$ give
\begin{equation}\label{eq10}
	0=\int_{L}\mbox{div}_{L}(A(\xi^{\top}))dL\geq\int_{L}\nu\|A\|^{2}dL\geq 0.
\end{equation}

	Thus $\nu\|A\|^{2}\equiv0,$ and since the foliation $\mathcal{F}$ is transverse to the field $\xi$, we have $\|A\|\equiv0$ along the leaf $L,$.

{\it Case 2}: If $L$ is a complete non-compact leaf, then inequality \eqref{eq8} implies that $\mbox{div}_{L}(A(\xi^{\top}))$ does not change sign and since $\|A(\xi^{\top})\|\in \mathcal{L}^{1}(\mathcal{F})$, Proposition $1$ of \cite{CamargoCS-2010}(see also the main Theorem in \cite{Karp:1981}) allows us to conclude that $\mbox{div}_{L}(A(\xi^{\top}))=0.$ Thus, $\|A\|\equiv0$ along the leaf $L$.

In both cases, $\mbox{Ric}_{\overline{M}}(\xi^{\top},N)\equiv0$ along the leaf $L.$ In particular, if the inequality $\eqref{desigualdadeNU}$ is satisfied, then
$$\nu\|A\|^{2}\geq \dfrac{\|A\|^{2}}{\|A\|^{2}+1}\geq 1-\nu\geq0.$$ Since $L$ is totally geodesic we can conclude that $\nu=1$ along of leaf  $L.$ This means that along the leaf  $L$ the vectors fields $\xi$ and $N$ are the same, i. e., $L$ is a leaf of $\mathcal{F}(\xi).$

{\it Case 3}: Define $f=1-\overline{g}(N,\xi)$. From $\eqref{eq9}$, $f\geq 0$ on $L$. For instance, if $f\equiv 0$, then $N$ coincides with $\xi$. In this case, given $u, v\in T_pL$ we have 
\begin{eqnarray*}
	\overline{g}(A(u),v)  =  -\overline{g} (\overline{\nabla}_uN,v)
	 =  -\overline{g}(\overline{\nabla}_u\xi,v)
	 =  \overline{g}(-\varphi u,v),
\end{eqnarray*}
therefore, $A\equiv-\varphi Id$, proving that $L$ is totally umbilic. On the other hand,  $\mathcal{F}$ is minimal, which means $L$ is totally geodesic. 

We are going to prove that case 3 implies $f\equiv 0$.

Let $p\in L\in \mathcal{F}$ and $v\in T_pL$. Then
\begin{eqnarray*}
	\overline{g}(\nabla \nu,v)  =  \overline{g}(\nabla_v N,\xi)+\overline{g}(N,\nabla_{v}\xi)
	 =  \overline{g}(-A(\xi^{\top}),v),
\end{eqnarray*}
hence, $\nabla \nu =-A(\xi^{\top})$ and $\overline{g}(\nabla \nu,A(\xi^{\top}))=-\|A(\xi^{\top})\|^2.$  From the fact that $\nabla \nu=-\nabla f$, $\overline{g}(\nabla f, A(\xi^{\top}))=\|A(\xi^{\top})\|^2\geq 0.$
By the Equation \eqref{equacaofundamental}, we get  
$$\mbox{div}_L(A(\xi^{\top})) =-\mbox{Ric}_{\overline{M}}(\xi^{\top},N)+\nu\|A\|^2.$$
Since $\overline{M}$ has non-positive Ricci curvature, $\mbox{div}_{L}(A(\xi^{\top}))\geq\nu \|A\|^2.$
By inequality \eqref{desigualdadeNU}, we have $\mbox{div}_L(A(\xi^{\top}))\geq f$ along $L$. The Weingarten operator $A$ is bounded and the field $\xi$ is unitary, then the field $A(\xi^{\top})$ is also bounded. Since $L$ has polynomial volume growth, by Theorem $2.1$ of \cite{AliasCN-2021}, we can conclude that $f\leq 0$ along $L$. And, by inequality \eqref{eq9}, we have $f\geq 0$ on $L$, so $f\equiv 0$ on $L$.
\end{proof} 
 
Regarding the conclusion that the leaf $L$ belongs to the Montiel foliation, we emphasize that $\nu\geq \frac{1}{\|A\|^{2}+1}$ and assuming $\nu$ does not change sign are absolutely necessary in Theorem A. The next couple of examples illustrate this fact.  

\begin{example}\label{exemplo1}
	 Let $\overline{M}^{3}$ be the tridimensional Euclidean space equipped with the conformal vector field $\xi=(0,0,1)$ and $\mathcal{F}(\xi)$ the Montiel foliation with respect to $\xi$. Let $\mathcal{F}=\{L_{\alpha}\}$ be a foliation given by complete non-compact planes
	 $$L_{\alpha}=\left\{  (x,y,z)\in\mathbb{R}^{3} | P_{\alpha}(x,y,z)=0, \text{where}\ P_{\alpha}=z-(x+y+a)  \right\}.$$ The support function $\nu=\overline{g}(\xi,N)=\frac{1}{\sqrt{3}}$ does not change sign, but $\overline{g}(\xi,N)<\frac{1}{\|A\|^2+1}=1$; see Figure~\ref{fig:exemplomin}.

\end{example} 

	\begin{figure}[!htb]
	\centering
	\includegraphics[scale=0.2]{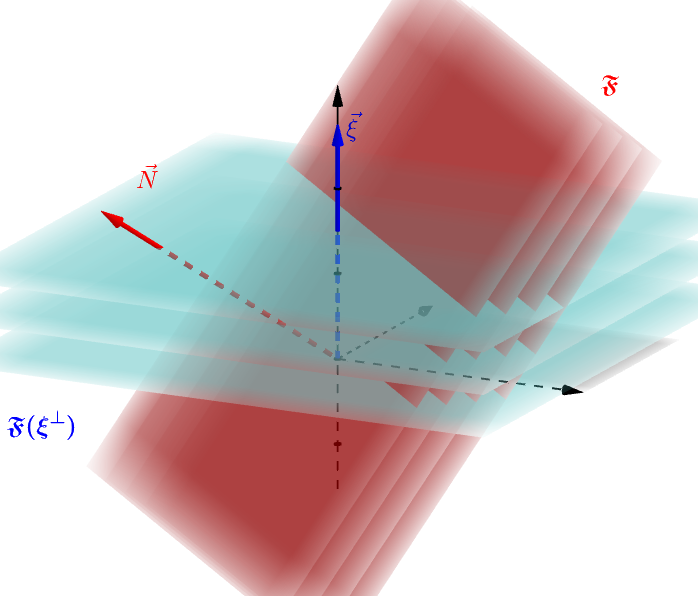}
	\caption {Counter-example: when $\nu$ does not satisfy inequality $\eqref{desigualdadeNU}$}
	\label{fig:exemplomin}
\end{figure}

\begin{example}\label{exemplo2}	 Let $\overline{M}^{3}$ be the tridimensional Euclidean space equipped with the conformal vector field $\xi=(1,0,0)$ and $\mathcal{F}(\xi)$ the Montiel foliation associated to $\xi$. For each $\beta\in\mathbb{R},$ consider the complete non-compact parameterized surfaces given by $S_{\beta}(u,v)=(u,u+\beta,v)$ and foliation  $\mathcal{F}=\{S_{\beta}\left(\mathbb{R}^2\right): \beta\in \mathbb{R}\}$. $\mathcal{F}$ is oriented by the unit vector field  $$N=\dfrac{1}{\sqrt{1+9u^{4}}}(3u^{2},-1,0).$$ In addition, the principal curvatures of the leaf  $L_{\beta}$ are $\kappa_{1}=0$ and $\kappa_{2}=\frac{6u}{(1+9u^{4})^{3/2}}.$ Hence, the foliations $\mathcal{F}$ and $\mathcal{F}(\xi)$ are transversal foliations such that $\nu$ changes sign and $\mathcal{F}$ is not a totally geodesic foliation. 
\end{example}
	\begin{figure}[!htb]
	\centering
	\includegraphics[width=0.8\linewidth]{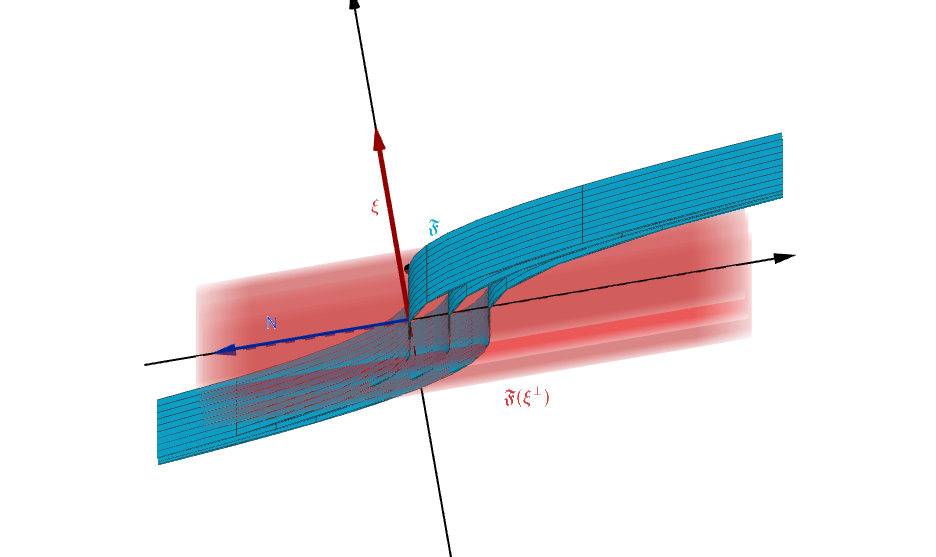}
	\caption{Counter-exemple: when $\nu$ changes sign}
	\label{figura:exemploAldirEuripedes}
\end{figure}

In the last part of Theorem A, the hypothesis that $A(\xi^{\top})\in \mathcal{L}^1(\mathcal{F})$ cannot be removed. 

\begin{example}
	By \cite{BombieriDG-1969}, there is a smooth function $f\colon \mathbb{R}^n\to \mathbb{R}$ $(n\geq 8)$ such that $Graph(f)$ is a minimal hypersurface of $\mathbb{R}^{n+1}$ that is not a hyperplane. Then, we define the foliation $\mathfrak{F}:= \bigcup_{t\in \mathbb{R}}{L_{t}}$ on $\mathbb{R}^{n+1}$, where each leaf $L_t$ is given by
	$$	
	L_t=\{(x,y)\in\mathbb{R}^n\times \mathbb{R}; y=f(x)+t\}.
	$$
	We have that each leaf $L_t$ is minimal hypersurface.
\end{example}
For a caracterization of minimal hypersurface on an Euclidean space see Theorem 4.1 \cite{SampiaoS2:2024}.

Changing the ambient space $\overline{M}$ from non-positive Ricci curvature to an Einstein manifold in the Theorem A, an analogous argument implies an alternative version of Theorem $3.1$ in \cite{BrasilDSS:2022}. 
\begin{corollary}
	Let $\overline{M}$ be an Einstein manifold equipped with a unit vector field $\xi$ which is closed and conformal. Let $\mathcal{F}$ be a minimal and transverse foliation with respect to field $\xi.$ If the support function $\nu=\overline{g}(\xi,N)$ satisfies the inequality $\ref{desigualdadeNU}$, then $\mathcal{F}$ is the Montiel foliation. Consequently, $\mathcal{F}$ is a totally geodesic foliation.
\end{corollary}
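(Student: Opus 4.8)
The plan is to substitute minimality and the Einstein condition into the fundamental identity \eqref{equacaofundamental} and then reproduce, almost verbatim, the integration scheme used for Theorem A; the structural improvement is that the curvature term will now vanish identically rather than merely having a favourable sign. First I would record two preliminary facts. Since $\xi$ is a \emph{unit} closed conformal field, differentiating $\overline{g}(\xi,\xi)=1$ in the direction of $\xi$ gives $\varphi\equiv 0$, so $\xi$ is parallel; by property (c) in the Preliminaries this already means $\mathcal{F}(\xi)$ is totally geodesic. Moreover, minimality of $\mathcal{F}$ gives $\mathcal{H}\equiv 0$ on every leaf, so the terms $\xi^{\top}(n\mathcal{H})$ and $n\varphi\mathcal{H}$ in \eqref{equacaofundamental} vanish and we are left with
\begin{equation*}
	\mbox{div}_{L}\left(A(\xi^{\top})\right)=-\mbox{Ric}_{\overline{M}}(\xi^{\top},N)+\nu\|A\|^{2}.
\end{equation*}

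Next I would invoke the Einstein hypothesis in the sharp form $\mbox{Ric}_{\overline{M}}=c\,\overline{g}$ for a constant $c$. Because $\xi^{\top}$ is tangent to $L$ while $N$ is normal, $\mbox{Ric}_{\overline{M}}(\xi^{\top},N)=c\,\overline{g}(\xi^{\top},N)=0$, so the identity collapses to $\mbox{div}_{L}(A(\xi^{\top}))=\nu\|A\|^{2}$; and since \eqref{desigualdadeNU} forces $\nu\geq \frac{1}{\|A\|^{2}+1}>0$, this divergence is nonnegative. From here the proof branches exactly as in Theorem A. If $L$ is compact, Stokes' theorem yields $\int_{L}\nu\|A\|^{2}\,dL=0$, whence $\|A\|\equiv 0$. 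If $L$ is complete noncompact with $\|A(\xi^{\top})\|\in\mathcal{L}^{1}(\mathcal{F})$, the vanishing divergence theorem of \cite{CamargoCS-2010} (see also \cite{Karp:1981}) forces $\mbox{div}_{L}(A(\xi^{\top}))\equiv 0$, again giving $\|A\|\equiv 0$. If instead $L$ has polynomial volume growth and $A$ is bounded, I would set $f=1-\nu$, use $\nabla\nu=-A(\xi^{\top})$ to write $\Delta_{L}f=\mbox{div}_{L}(A(\xi^{\top}))=\nu\|A\|^{2}\geq f$ (the last inequality being \eqref{desigualdadeNU} rearranged together with the Cauchy--Schwarz bound $\nu\leq 1$), and apply Theorem~2.1 of \cite{AliasCN-2021} to conclude $f\leq 0$, hence $f\equiv 0$.

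In the first two cases $\|A\|\equiv 0$ makes $L$ totally geodesic, and then \eqref{desigualdadeNU} reads $\nu\geq 1$; together with $\nu\leq 1$ this gives $\nu\equiv 1$, i.e.\ $N\equiv\xi$ along $L$, so $L$ is a leaf of $\mathcal{F}(\xi)$. In the third case $f\equiv 0$ already gives $\nu\equiv 1$ and $N\equiv\xi$, so $A\equiv-\varphi\,\mathrm{Id}=0$ and again $L\in\mathcal{F}(\xi)$. Since every leaf of $\mathcal{F}$ is thus a leaf of $\mathcal{F}(\xi)$, the two foliations coincide, and by the first step $\mathcal{F}=\mathcal{F}(\xi)$ is totally geodesic. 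The main obstacle is that the hypothesis ``minimal and transverse'' is, for noncompact leaves, too weak on its own: the Bombieri--De~Giorgi--Giusti graphs \cite{BombieriDG-1969} show that minimal transverse leaves need not be totally geodesic, so the sign condition $\mbox{div}_{L}(A(\xi^{\top}))\geq 0$ must be upgraded to genuine vanishing, and this forces one to import from Theorem A a control at infinity (integrability of $A(\xi^{\top})$, or polynomial volume growth with bounded $A$). The Einstein condition removes the curvature term cleanly, but it does nothing to tame the behaviour of $A(\xi^{\top})$ at infinity.
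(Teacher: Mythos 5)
Your proposal is correct and follows essentially the same route as the paper: the paper's own proof likewise observes that the Einstein condition kills $\mbox{Ric}_{\overline{M}}(\xi^{\top},N)$, uses minimality to reduce \eqref{equacaofundamental} to $\mbox{div}_{L}(A(\xi^{\top}))=\nu\|A\|^{2}\geq 1-\nu\geq 0$ via \eqref{desigualdadeNU}, and then simply cites ``a similar argument in Theorem A'' --- precisely the case analysis (compact/Stokes, $\mathcal{L}^{1}$ plus the Caminha-type divergence theorem, polynomial volume growth plus Al\'ias--Caminha--do Nascimento) that you spell out explicitly. Your extra observations --- that $\|\xi\|\equiv 1$ forces $\varphi\equiv 0$, and that for noncompact leaves the conclusion genuinely needs Theorem A's hypotheses at infinity, which the corollary's statement leaves implicit --- are accurate refinements of what the paper glosses over.
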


\begin{proof}
	Notice that $\mbox{Ric}_{\overline{M}}(\xi^{\top},N)=0.$ By the minimality of $\mathcal{F}$,
	$$\mbox{div}_{L}(A(\xi^{\top}))=\nu ||A||\geq 1-\nu.$$ So the Cauchy-Schuwarz inequality implies $1-\nu\geq0$. Then, by a similar argument in Theorem A, we conclude that $\mathcal{F}$ is a totally geodesic foliation. 
\end{proof}


\subsection{Proof of Theorem B}
\begin{proof}
	The mean curvature function $\mathcal{H}\colon \overline{M}\rightarrow \mathbb{R}$ assigns each point $p\in \overline{M}^{n+1}$ to the mean curvature of the leaf passing through $p$. Since $\mathcal{F}$ is a CMC foliation, Proposition $2.31$ of \cite{BarbosaKO:1991} asserts that $\mathcal{H}$ is constant on $\overline{M}$ or the set of maximum points of $\mathcal{H}$ contains at least one compact leaf. 
	
	{\it Claim}: $\mathcal{H}$ is constant on $\overline{M}$.
	
	Suppose that $\mathcal{H}$ is not constant on $\overline{M}.$ There exists a compact leaf  $\Sigma\in\mathcal{F}$ where $\mathcal{H}$ attains maximum value $\mathcal{H}_{\Sigma}$. Taking the restriction of $n\mathcal{H}\varphi$ to  $\Sigma$, the fact that $(n+1)\varphi=\mbox{div}(\xi)$ and applying Stokes' theorem, we have
	
	\begin{eqnarray}\label{eq11}
		\int_{\Sigma}{n\mathcal{H}\varphi }d\Sigma   &=&  \dfrac{n\mathcal{H}_{\Sigma}}{n+1}\int_{\Sigma}{\mbox{div}(\xi)}d\Sigma=0.
	\end{eqnarray}
	
	Since $\mbox{Ric}_{\overline{M}}(\xi,N)\leq0$ and $\nu\geq0,$ Equation \eqref{equacaofundamental} gives
	\begin{equation}\label{eq12}
		0=\int_{\Sigma}\mbox{div}_{\Sigma}(A(\xi^{\top}))d\Sigma=\int_{\Sigma}-\mbox{Ric}_{\overline{M}}(\xi^{\top},N)d\Sigma+\int_{\Sigma}\nu||A||^{2}d\Sigma.
	\end{equation} 
	
	Hence, $\|A\|=0$ along $\Sigma$. In particular, $\mathcal{H}_{\Sigma}$ is zero. Thus $\mathcal{H}\leq 0.$ 
 	
 	Now, similarly, if $q$ is a minimal point of $\mathcal{H}$ in $\overline{M},$ we can prove that $\mathcal{H}\geq0.$ So, $\mathcal{H}$ is identically zero in $\overline{M},$ contradicting our assumption that $\mathcal{H}$ is not constant on $\overline{M}$. 
 	
 	{\it Case 1}: Supposed $L$ is a compact leaf  of $\mathcal{F}$. Then $\mbox{Ric}(\xi^{\top}, N)\leq 0 $ and $\xi^{\top}(n\mathcal{H})=0$ combined with Proposition $\ref{prop:eqfundamental}$ and Stoke's Theorem implies that
 	\begin{eqnarray}
 		0=\int_{L}\mbox{div}(A(\xi^{\top}))=\int_{L}{\left(-\mbox{Ric}_{\overline{M}}(\xi^{\top},N) +\nu||A||^{2}\right)dL}.\nonumber
 	\end{eqnarray} 
 This means that $\mbox{Ric}_{\overline{M}}(\xi^{\top},N)=0$ and $\|A\|=0$ along $L$.
 	
 	{\it Case 2}: Supposed that $L$ is a complete non-compact leaf. Since $\xi$ is a closed and conformal vector field, $$(n+1)\varphi|_{L}=\mbox{div}(\xi)|_L=\mbox{div}_L(\xi)+\varphi|_L,$$ 
 	so, $\varphi|_L=\frac{\mbox{div}_L(\xi)}{n}$. Then, we can write Equation $(\ref{equacaofundamental})$ as follows
 	\begin{eqnarray}\label{eq14}
 		\mbox{div}_{L}(A(\xi^{\top})-\mathcal{H}\xi^{\top}) & = &-\mbox{Ric}_{\overline{M}}(\xi^{\top},N) + \nu||A||^{2}.
 	\end{eqnarray}
 	Consider the vector field $X= A(\xi^{\top})-\mathcal{H}\xi^{\top}$. If this field $\|X\|\in \mathcal{L}^1(\mathcal{F})$ and the function $\mbox{div}_L(X)$ does not change sign, by Proposition $1$ in \cite{CamargoCS-2010}, we have that $\mbox{div}_{L}(X)=0.$ Consequently, $\mbox{Ric}_{\overline{M}}(\xi^{\top},N)=0$ and $\|A\|=0$ along of leaf  $L,$ i.e., the leaf  $L$ is totally geodesic.
 	
 	Finally, when $\nu\geq\dfrac{1}{||A||^{2}+1}$, Equation $\eqref{eq14}$ gives
 	\begin{equation}
 		0=\mbox{div}_{L}(A(\xi^{\top})-\mathcal{H}\xi^{\top})\geq \nu||A||^{2}\geq 1-\nu\geq 0.\nonumber
 	\end{equation}
 	So, $\nu=1$. Consequently, $\mathcal{F}$ is the Montiel foliation.
 	
 	{\it Case 3}: Supposed $L$ a complete non-compact leaf  of $\mathcal{F}$. Define $f=1-\overline{g}(N,\xi)$ on $\overline{M}$. Equation \eqref{eq9} allows us to conclude that $f\geq 0$ on $L$. Proving that $f\equiv 0$ on $L$ implies that $N$ will be identically equal to $\xi$, and the foliations are the same. Indeed, given $u, v\in T_pL$, we have
 	\begin{eqnarray*}
 		\overline{g}(A(u),v) =  -\overline{g} (\overline{\nabla}_uN,v)
 		=  -\overline{g}(\overline{\nabla}_u\xi,v)
 	 =  \overline{g}(-\varphi u,v),
 	\end{eqnarray*}
 	thus $A=-\varphi Id$. So $L$ is a totally umbilic submanifold. In addition, when $\nu=1$, then $\nu$ satisfies the inequality $\nu\geq\frac{1}{\|A\|^{2}+1}$ and consequently $L$ is totally geodesic. 
 	
 	Thus, it is sufficient to show that $f$ is identically zero along $L$. Consider $p\in L\in \mathcal{F}$, a vector $v\in T_pL$, then
 	\begin{eqnarray*}
 		\overline{g}(\nabla \nu,v)  =  \overline{g}(\overline{\nabla}_v N,\xi)+\overline{g}(N,\overline{\nabla}_{v}\xi)  =  \overline{g}(-A(\xi^{\top}),v).
 	\end{eqnarray*}
 	Hence, the gradient of $\nu$ is $\nabla \nu =-A(\xi^{\top})$, so $\overline{g}(\nabla f,X)\geq 0$.
 	From the Equation \eqref{desigualdadeNU}, we have
 	$$\mbox{div}_L(A\xi^{\top}-\mathcal{H}\xi^{\top}) =-\mbox{Ric}_{\overline{M}}(\xi^{\top},N)+\nu\|A\|^2.$$
 	The ambient space $\overline{M}$ have non-negative Ricci curvature, then 
 	$$\mbox{div}_L(X)\geq\nu \|A\|^2.$$
	Since $\nu\geq\dfrac{1}{||A||^{2}+1}$, we have $\mbox{div}_L(X)\geq f$ along $L$. Since the operator $A$ is bounded and the field $\xi$ is unitary, therefore the field $X$ is bounded. Since $L$ has polynomial volume growth, using the Theorem $2.1$ in \cite{AliasCN-2021}, we can conclude that $f\leq 0$ along $L$. And, by the inequality \eqref{eq9}, we have $f\geq 0$ on $L$, so $f\equiv 0$ on $L$. Therefore, $N=\xi$ along $L$, then $L$ is a totally geodesic leaf of $\mathcal{F}(\xi)$.
 	
\end{proof}

\subsection{Proof of Theorem C and its consequences}
%

%

\begin{proof}
First, we are going to prove that \(\mathcal{G}^{\xi}_L \geq 0\). Suppose that  $\mathcal{G}^{\xi}_{L}<0.$ Thus, from the Equation~\eqref{equacaofundamental} and the fact $n\mathcal{H}^{2}\leq\|A\|^{2}$, we have
	\begin{eqnarray}
		\label{eqsup2}
		\nu(n\mathcal{H}^{2})+{\xi}^{\top}(n\mathcal{H})\leq \nu\|A\|^{2}+{\xi}^{\top}(n\mathcal{H})<0.
	\end{eqnarray}
Let $\gamma$ be an integral curve of ${\xi}^{\top}$. Inequality~\eqref{eqsup2} can be restated as
\begin{eqnarray}\label{eqsupgamma}
	\nu(n\mathcal{H}_{\gamma(s)}^{2})+n{\mathcal{H}'}_{\gamma(s)}&<&0\nonumber\\
	{\mathcal{H}'}_{\gamma(s)}<-\nu \mathcal{H}_{\gamma(s)}^{2}&<&0.
\end{eqnarray}
The ambient space $\overline{M}^{n+1}$ is complete, which means that $\gamma$ can be extended to all $s\in\mathbb{R}$. We can choose the normal field $N$ such that $\mathcal{H}_{\gamma}\leq 0$. We just need to replace the fields $N$ and $\xi$ by the fields $-N$ and $-\xi$, since  $\mbox{div}_L(A({\xi}^{\top}))$, the field $\mathcal{H}{\xi}^{\top}$ and the curvature $\mbox{Ric}_{\overline{M}}({\xi}^{\top},N)$ do not change. Thus, consider the interval $I=]a,+\infty[$ such that $\mathcal{H}$ is negative on $I.$ In this case,
\begin{eqnarray}
	\mathcal{H}_{\gamma(s)}<0 \,\,\,\text{, }\,\,\,{\mathcal{H}'}_{\gamma(s)}<0\,\,\,\text{ e }\,\,\,\dfrac{-{\mathcal{H}'}_{\gamma(s)}}{\mathcal{H}^{2}_{\gamma(s)}}>\nu\geq0\nonumber,
\end{eqnarray} 
for $s>a.$ Define the auxiliary function $G(s)=-\dfrac{1}{\mathcal{H}_{\gamma(s)}}.$ From the Equation~\eqref{eqsupgamma}, we have
\begin{eqnarray}\label{eqG}
	{G'}(s)=\dfrac{{\mathcal{H}'}_{\gamma(s)}}{\mathcal{H}^{2}_{\gamma(s)}}<-\nu.
\end{eqnarray}
The Mean Value Theorem applied on $G$, for a fixed $b$, $b<s\in I$,
\begin{eqnarray}\label{eqABS}
	-\dfrac{1}{\mathcal{H}_{\gamma(s)}}+\dfrac{1}{\mathcal{H}_{\gamma(b)}}&=&\left(\dfrac{{\mathcal{H}'}_{\gamma(s)}}{\mathcal{H}^{2}_{\gamma(y)}}\right)(s-b)\nonumber\\
	-\dfrac{1}{\mathcal{H}_{\gamma(s)}}+\dfrac{1}{\mathcal{H}_{\gamma(b)}}&<&-\nu_{0}(s-b)
\end{eqnarray}
When $s$ grows to $+\infty$, the left hand side of the inequality \eqref{eqABS} converges to a constant while the right hand side tends to $-\infty,$ a contradiction. Therefore,  $\mathcal{G}_{L}^{\xi}\geq0$. 
 
 Now we prove inequality \eqref{eqGmaiorqueH}. From the previous discussion, $\mathcal{G}_{L}^{\xi}=a^{2},$ where $a$ is a non-negative real number. If $\mathcal{G}_{L}^{\xi}$ is infinite, there is nothing to prove. Assume $\mathcal{G}_{L}^{\xi}< \mathcal{H}_{L}^2$. Then, there exists $p\in L$ such that at this point
 	\begin{equation}\label{eqHabsurdo}
 		\mathcal{H}^{2}_{p}>a^{2}.
 	\end{equation}
We are going to consider $a>0$ and $a=0$ separately.   
 
 Assume $a>0.$ Inequality~\eqref{eqHabsurdo} provides $\mathcal{H}^{2}-a^{2}>0.$ Consider $\gamma$ the integral curve of $\xi^{T}$, with $\gamma(0)=p.$ We can choose a direction such that $\mathcal{H}_{p}<0.$ Thus, using \eqref{eqHabsurdo}, we conclude that $\mathcal{H}_{p}=\mathcal{H}_{\gamma(0)}<-a.$ Since $\mathcal{H}$ is a continuous function, there is a maximal interval $[0,b[$ where 
 \begin{equation}\label{eqHabsurdointervalo}
 	\mathcal{H}^{2}_{\gamma(s)}-a^{2}>0,\,\,\,\forall s\in [ 0,b[   .
 \end{equation}
We have that $b=+\infty$, otherwise, if $b$ is finite, we would have $\mathcal{H}_{\gamma(b)}=-a,$ that is, $\mathcal{H}_{\gamma(b)}^{2}=a^{2}.$ Thus, by the Equation \eqref{equacaofundamental} and the inequality $n\mathcal{H}\leq\|A\|^{2}$,
 \begin{eqnarray}\label{eqHgamma}
 	{\xi}^{\top}(n\mathcal{H}) &=& \mbox{div}_{L}\left( A({\xi}^{\top}-\mathcal{H}\xi^{\top})\right)+\mbox{Ric}_{\overline{M}}(\xi^{\top},N)-\nu\|A\|^{2}\nonumber\\
 	&\leq&\mathcal{G}_{L}^{\xi}-\nu\|A\|^{2}\leq a^{2}-n\nu \mathcal{H}^{2}\nonumber\\
 	&\leq& a^{2}-\mathcal{H}^{2}.\nonumber
 \end{eqnarray}
 This implies ${\mathcal{H}'}_{\gamma(s)}<0$ for all $s\in[0,b[,$ or $\mathcal{H}$ is strictly decreasing on $[0,b]$. On the other hand, $\mathcal{H}_{\gamma(0)}<-a$ and $\mathcal{H}_{\gamma(b)}=-a$, a contradiction. The initial statement is true, $b=+\infty.$ From this step, we can write the following inequalities, for all $s>0.$
 \begin{equation}\label{eqHabsurdototal}
 	\mathcal{H}_{\gamma(s)}<0,\,\,\,\,\,\,\, {\mathcal{H}'}_{\gamma(s)}\leq 0\,\,\,\,\, \text{  e  }\,\,\,\,\, \dfrac{{\mathcal{H}'}_{\gamma(s)}}{a^{2}-\mathcal{H}_{\gamma(s)}^{2}}\geq1.
 \end{equation}
When $s>0,$ let $h$ be the following function $$h(s)=\dfrac{1}{2a}\ln\left(\dfrac{\mathcal{H}_{\gamma(s)}-a}{\mathcal{H}_{\gamma(s)}+a}\right).$$ By fixing a strictly positive real number $b_{0}$ and applying the Mean Value Theorem for $h$ restricted to $[b_{0},s],$ there exists a number $c\in ]b_{0},s[$, such that $h(s)-h(b_{0})={h'}(c)(s-b_{0})$, i.e.,
 \begin{eqnarray}
 	\dfrac{1}{2a}\ln\left(\dfrac{\mathcal{H}_{\gamma(s)}-a}{\mathcal{H}_{\gamma(s)}+a}\right)-\dfrac{1}{2a}\ln\left(\dfrac{\mathcal{H}_{\gamma({b_{0}})}-a}{\mathcal{H}_{\gamma(b_{0})}+a}\right)<(-1)(s-b_{0}).\nonumber
 \end{eqnarray}
 
 When $s$ tends to $+\infty$ we have that the left hand side tends to a constant while the right hand side tends to $-\infty$, a contradiction. Therefore, there is no point $p\in L$ such that $\mathcal{H}_{p}<a^{2}.$
 
 The case $a=0$ follows a similar argument. Therefore, $\mathcal{G}^{\xi}_{L}\geq \mathcal{H}^{2}.$
 
 Suppose that $\mathcal{G}_{L}^{\xi}=0$. The inequality~\eqref{eqGmaiorqueH} implies $\mathcal{H}_{L}=0.$ Since $\mathcal{G}_{L}^{\xi}$ is the supreme on the leaf $L$, Equation~\eqref{equacaofundamental} provides
 \begin{equation*}
 	{\xi}^{\top}\left(n\mathcal{H}\right)+\nu\|A\|^{2} = \mbox{div}_{L}\left( A({\xi}^{\top})-\mathcal{H}{\xi}^{\top} \right)+\mbox{Ric}_{\overline{M}}\left( {\xi}^{\top},N \right) \leq\mathcal{G}_{L}^{\xi}=0,
 \end{equation*}
 and $\nu\|A\|^{2}=0$, i.e., $A=0$. Therefore, the leaf $L$ is totally geodesic.
 
 Conversely, suppose that $L\in \mathcal{F}$ is a totally geodesic leaf of $\mathcal{F}$. Then $A=0$ and $\mathcal{H}_L=0$. We can rewrite \eqref{equacaofundamental} along the leaf $L,$ as
 \begin{eqnarray*}
 	\mbox{div}_{L}\left(A({\xi}^{\top}) - \mathcal{H}{\xi}^{\top}\right)+\mbox{Ric}_{\overline{M}}({\xi}^{\top},N) = 0,
 \end{eqnarray*}
 meaning $\mathcal{G}_{L}^{\xi}=0.$
 \end{proof}

The following corollary highlights an important application of the maximum principal of Yau \cite{yau:1975} to the study of transverse foliations. 
\begin{corollary}
	Let $\overline{M}^{n+1}$ be a Riemannian manifold equipped with a Montiel Foliation $\mathcal{F}(\xi)$. Let $\mathcal{F}$ be a foliation transverse to $\mathcal{F}(\xi)$ and $L\in\mathcal{F}$ where $\nu\geq \nu_{0}>0$ and $\dim L\geq \frac{1}{\nu_{0}} $. If $L$ is a complete leaf with Ricci curvature bounded from below and $\mathcal{G}_{L}^{\xi}\in \mathbb{R}$, then there is a sequence of points $\{p_{k}\}\in L$ such that:
	\begin{enumerate}
		\item $\lim\limits_{k\rightarrow +\infty}{\mathcal{H}_{L}(p_{k})}=\sup_{L}{\mathcal{H}_{L}}$;
		\item $\lim\limits_{k\rightarrow +\infty}{\|\nabla \mathcal{H}_{L}(p_{k})\|}=0$;
		\item $\lim\limits_{k\rightarrow +\infty}{\Delta \mathcal{H}_{L}(p_{k})}\leq 0$.
	\end{enumerate} 
	
\end{corollary}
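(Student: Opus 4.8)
The plan is to apply the generalized maximum principle of Yau \cite{yau:1975} to the mean curvature function $\mathcal{H}_L$ on the leaf $L$. Yau's principle requires three ingredients: that $L$ be complete, that its Ricci curvature be bounded from below, and that the function under consideration be bounded from above. The first two hypotheses are assumed directly in the statement of the corollary, so the only point I must establish is that $\mathcal{H}_L$ is bounded from above on $L$. Once this is in place, the three asserted limits are exactly the output of Yau's principle applied to $u=\mathcal{H}_L$.

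First I would invoke Theorem C. The hypotheses of the corollary, namely $\nu\geq\nu_0>0$, $\dim L\geq 1/\nu_0$, and $L$ complete, are precisely those of Theorem C; hence its conclusion $\mathcal{H}_L^2\leq\mathcal{G}_L^{\xi}$ holds. Moreover, the argument establishing $\mathcal{G}_L^{\xi}\geq 0$ in the proof of Theorem C guarantees that $\sqrt{\mathcal{G}_L^{\xi}}$ is a genuine nonnegative real number once we use the hypothesis $\mathcal{G}_L^{\xi}\in\mathbb{R}$. Consequently $|\mathcal{H}_L|\leq\sqrt{\mathcal{G}_L^{\xi}}<+\infty$ everywhere on $L$, so $\mathcal{H}_L$ is bounded and, in particular, bounded from above.

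With the boundedness of $\mathcal{H}_L$ secured, I would apply Yau's maximum principle to the smooth bounded function $\mathcal{H}_L$ on the complete manifold $L$ whose Ricci curvature is bounded from below. This produces a sequence $\{p_k\}\subset L$ with $\mathcal{H}_L(p_k)\to\sup_L\mathcal{H}_L$, with $\|\nabla\mathcal{H}_L(p_k)\|\to 0$, and with $\limsup_{k}\Delta\mathcal{H}_L(p_k)\leq 0$, which are exactly assertions (1), (2), and (3). I do not expect any genuine obstacle here: the entire content of the corollary is the reduction to Yau's principle, and the single substantive step is recognizing that the one missing ingredient, namely the upper bound on $\mathcal{H}_L$, is delivered automatically by the estimate $\mathcal{H}_L^2\leq\mathcal{G}_L^{\xi}$ of Theorem C together with the assumed finiteness of $\mathcal{G}_L^{\xi}$. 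The mild technical care required is only in stating (3) as a genuine limit along a subsequence, since Yau's principle produces $\limsup_k\Delta\mathcal{H}_L(p_k)\leq 0$, from which a subsequence with the limiting inequality can be extracted.
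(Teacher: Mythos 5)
Your proposal is correct and follows exactly the paper's own argument: invoke Theorem~C to obtain $\mathcal{H}_{L}^{2}\leq \mathcal{G}_{L}^{\xi}$, use the finiteness hypothesis $\mathcal{G}_{L}^{\xi}\in\mathbb{R}$ to conclude that $\mathcal{H}_{L}$ is bounded, and then apply Yau's maximum principle on the complete leaf with Ricci curvature bounded from below. The only difference is that you spell out the verification of hypotheses and the subsequence extraction for assertion (3), which the paper leaves implicit.
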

\begin{proof}	
	It follows from the Theorem~\ref{TeoInvariante} that $\mathcal{H}_{L}^{2}\leq \mathcal{G}_{L}^{\xi}$ and we have that $\mathcal{G}_{L}^{\xi}<+\infty$, so the curvature mean function $\mathcal{H}_{L}$ is bounded. By \cite{yau:1975}, we conclude the proof.
\end{proof}

Another consequence of the Theorem~\ref{TeoInvariante} combined with the Hopf–Calabi Theorem in \cite{Calabi-1957} is given below. This is Corollary D from the introduction.

\begin{corollary}
	Let $\overline{M}^{n+1}$ be a Riemannian manifold equipped with a Montiel Foliation $\mathcal{F}(\xi)$. Let $\mathcal{F}$ be a foliation transverse to $\mathcal{F}(\xi)$ and $L\in\mathcal{F}$ where $\nu\geq \nu_{0}>0$ and $\dim L\geq \frac{1}{\nu_{0}} $. If $\mathcal{H}_L$ is subharmonic and there is $p\in L$ such that $\mathcal{H}_L(p)=\left(\mathcal{G}_{L}^{\xi}\right)^{\frac{1}{2}}\in \mathbb{R}$, then $L$ has constant mean curvature. 
\end{corollary}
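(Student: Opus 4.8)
The plan is to combine the sharp estimate $\mathcal{H}_L^2 \le \mathcal{G}_L^\xi$ from Theorem~\ref{TeoInvariante} with the classical Hopf--Calabi strong maximum principle. Writing $a := \left(\mathcal{G}_L^\xi\right)^{1/2}$, the inequality from Theorem~\ref{TeoInvariante} says precisely that $\mathcal{H}_L(q) \le a$ for every $q \in L$; here I am implicitly choosing the normal orientation so that I am comparing against the upper bound (replacing $N,\xi$ by $-N,-\xi$ as in the proof of Theorem~\ref{TeoInvariante} does not affect $\mathcal{G}_L^\xi$). The hypothesis furnishes a point $p \in L$ at which this upper bound is attained, $\mathcal{H}_L(p) = a$. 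Thus the function $\mathcal{H}_L$ attains an interior maximum on $L$ at $p$.

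\emph{Key steps.} First I would record that $a = \mathcal{H}_L(p) = \max_L \mathcal{H}_L$, so $p$ is a global (hence interior) maximum of $\mathcal{H}_L$ on the complete leaf $L$. Second, I would invoke the hypothesis that $\mathcal{H}_L$ is subharmonic, $\Delta \mathcal{H}_L \ge 0$ on $L$. Third, I would apply the Hopf--Calabi strong maximum principle in the form proved by Calabi~\cite{Calabi-1957}: a subharmonic function on a connected Riemannian manifold that attains an interior maximum must be constant in a neighborhood of that point, and the set where it equals its maximum value is then both open and closed; by connectedness of the leaf $L$ it is all of $L$. Hence $\mathcal{H}_L \equiv a$ on $L$, which is exactly the assertion that $L$ has constant mean curvature.

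The main subtlety — the step I would be most careful about — is verifying that the Hopf--Calabi hypotheses genuinely apply: namely that $L$ is connected (leaves of a foliation are taken connected, so this is immediate) and that the version of the maximum principle being cited requires only subharmonicity of a continuous/$C^2$ function attaining an interior maximum, with no completeness or curvature assumption needed for this qualitative conclusion. Completeness of $L$ is used upstream to guarantee $\mathcal{H}_L^2 \le \mathcal{G}_L^\xi$ via Theorem~\ref{TeoInvariante}, but the maximum-principle step itself is local-to-global and only needs connectedness. I would therefore present the argument as: the global bound from Theorem~\ref{TeoInvariante} identifies $a$ as the supremum of $\mathcal{H}_L$; the attainment hypothesis makes it an interior maximum; and Calabi's theorem upgrades a subharmonic interior maximum to a constant, completing the proof that $\mathcal{H}_L$ is constant on $L$.

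One point worth double-checking is the sign convention and which extremum is relevant: the estimate bounds $\mathcal{H}_L$ from above by $a$ after fixing the orientation, so the attained value $\mathcal{H}_L(p)=a$ is a maximum rather than a minimum, which is compatible with subharmonicity (a subharmonic function satisfies the maximum principle, so an interior maximum forces constancy). This matches the Hopf--Calabi formulation exactly, so no further adjustment is needed and the corollary follows directly.
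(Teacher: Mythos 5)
Your argument is correct and is essentially the paper's own proof, just with the details spelled out: Theorem~\ref{TeoInvariante} bounds $\mathcal{H}_L$ above by $\left(\mathcal{G}_L^{\xi}\right)^{1/2}$, the attainment hypothesis makes $p$ an interior maximum of the subharmonic function $\mathcal{H}_L$, and the Hopf--Calabi maximum principle of \cite{Calabi-1957} then forces $\mathcal{H}_L$ to be constant. (Your orientation discussion is superfluous, since $\mathcal{H}_L^2 \le \mathcal{G}_L^{\xi}$ already gives the upper bound $\mathcal{H}_L \le \left(\mathcal{G}_L^{\xi}\right)^{1/2}$ with the given orientation.)
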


\begin{proof}
	 The application $\mathcal{H}_{L}$ is subharmonic and there is a $p\in L$ such that $\mathcal{H}_{L}(p)=\big(\mathcal{G}_{L}^{\xi}\big)^{\frac{1}{2}}.$ Using the principle of the Hopf-Calabi maximum in \cite{Calabi-1957}, it follows that $\mathcal{H}_{L}$ is constant.
\end{proof}

\noindent {\bf Acknowledgements.} The authors would like to thank Alexandre Fernandes and Edson Sampaio for their interest in this manuscript. In addition, we thank Fabiano Brito, Eduardo Longa and Francisco Gozzi for their precise suggestions that greatly improve this manuscript.

\end{document}